\numberwithin{equation}{section}
\numberwithin{table}{section}
\numberwithin{figure}{section}
\gdef\AQ#1{}
\gdef\CQ#1{}
\newtheorem{theorem}{Theorem}[section]
\newtheorem{lemma}[theorem]{Lemma} 
\newtheorem{proposition}[theorem]{Proposition}
\newtheorem{alg}[theorem]{Algorithm} 
\theoremstyle{EX}
\newtheorem{example}[theorem]{Example} 
\newtheorem{definition}[theorem]{Definition} 
\newtheorem{question}[theorem]{Question}
\newcommand{\bdes}{\begin{description}}
\newcommand{\edes}{\end{description}}
\newcommand{\bal}{\begin{align}}
\newcommand{\eal}{\end{align}}
\newcommand{\bnum}{\begin{enumerate}}
\newcommand{\enum}{\end{enumerate}}
\newcommand{\bit}{\begin{itemize}}
\newcommand{\eit}{\end{itemize}}
\newcommand{\beq}{\begin{equation}}
\newcommand{\eeq}{\end{equation}}
\newcommand{\baray}{\begin{array}}
\newcommand{\earay}{\end{array}}
\newcommand{\bsry}{\begin{subarray}}
\newcommand{\esry}{\end{subarray}}
\newcommand{\bca}{\begin{cases}}
\newcommand{\eca}{\end{cases}}
\newcommand{\bcen}{\begin{center}}
\newcommand{\ecen}{\end{center}}
\newcommand{\bbm}{\begin{bmatrix}}
\newcommand{\ebm}{\end{bmatrix}}
\newcommand{\bmx}{\begin{matrix}}
\newcommand{\emx}{\end{matrix}}
\newcommand{\bpm}{\begin{pmatrix}}
\newcommand{\epm}{\end{pmatrix}}
\newcommand{\btab}{\begin{tabular}}
\newcommand{\etab}{\end{tabular}}
\newcommand{\N}{\mathbb{N}}
\newcommand{\cpx}{\mathbb{C}}
\newcommand{\lmd}{\lambda}
\newcommand{\af}{\alpha}
\newcommand{\bt}{\beta}
\newcommand{\reff}[1]{(\ref{#1})}
\newcommand{\mcal}[1]{\mathcal{#1}} 
\newcommand{\cv}[1]{\mbox{conv}\left(#1\right)}
\newcommand{\supp}[1]{\mbox{supp}(#1)}
\newcommand{\st}{\mathit{s.t.}}
\newcommand{\ideal}{\mathit{Ideal}}
\newcommand{\qmod}{\mathit{Qmod}}
\newcommand{\tx}{\tilde{x}}
\newcommand{\ty}{\tilde{y}}
\newcommand{\tc}{\tilde{c}}
\newcommand{\sig}{\sigma}
\newcommand{\Sig}{\Sigma}
\newcommand{\Dt}{\Delta}
\newcommand{\hm}{\mathit{hom}}
\newcommand{\inte}[1]{\mathit{int}\left(#1\right)}
\newcommand{\cl}[1]{\mathit{cl}\left(#1\right)}
\newcommand{\tr}[1]{\mbox{trace}(#1)}
\newcommand{\mt}[1]{\mathtt{#1}}
\DeclareMathOperator{\rank}{rank}
\def\re{\mathbb{R}}
\def\cpx{\mathbb{C}}
\def\eps{\epsilon}
\def\nn{\nonumber}
\begin{document}

\title{The Multi-Objective Polynomial Optimization}


\author{Jiawang Nie}
\address{Department of Mathematics,
University of California San Diego,
9500 Gilman Drive, La Jolla, CA, USA, 92093.}
\email{njw@math.ucsd.edu}

\author{Zi Yang}
\address{Department of Mathematics and Statistics,
University at Albany SUNY,
1400 Washington Ave, Albany, NY, USA, 12222.}
\email{zyang8@albany.edu}

\begin{abstract}
The multi-objective optimization is to optimize several objective functions
over a common feasible set. Since the objectives usually do not share
a common optimizer, people often consider (weakly) Pareto points.
This paper studies multi-objective optimization problems
that are given by polynomial functions.
First, we study the geometry for (weakly) Pareto values
and represent Pareto front as the boundary of a convex set.
Linear scalarization problems (LSPs)
and Chebyshev scalarization problems (CSPs)
are typical approaches for getting (weakly) Pareto points.
For LSPs, we show how to use tight relaxations to solve them,
how to detect existence or nonexistence of proper weights.
For CSPs, we show how to solve them by moment relaxations.
Moreover, we show how to check if a given point is a
(weakly) Pareto point or not and how to detect
existence or nonexistence of (weakly) Pareto points.
We also study how to detect unboundedness of polynomial optimization,
which is used to detect nonexistence of
proper weights or (weakly) Pareto points.
\end{abstract}

\keywords{Pareto point, Pareto value,
polynomial, scalarization, moment relaxation}

\subjclass[2020]{90C23,90C29,90C22}
\maketitle

\section{Introduction}

The multi-objective optimization problem (MOP) is to optimize several
objectives simultaneously over a common feasible set.
MOPs have broad applications in economics \cite{geiger2011},
finance \cite{chen2017mean}, medical science \cite{rosen2017,van2012multi},
and machine learning \cite{wang2014cvx}.
In this paper, we consider the MOP in the form
\begin{equation} \label{prob:mop}
\left\{ \begin{array}{rl}
\min  &   f(x) \coloneqq (f_1(x),\ldots,f_m(x))  \\
\st &   c_i(x)=0 \, (i \in \mcal{E}),  \\
    &   c_j(x) \ge 0 \, (j \in \mcal{I}), \\
 \end{array} \right.
\end{equation}
where all functions $f_i, c_i, c_j$ are polynomials in $x  \coloneqq (x_1, \ldots, x_n) \in \re^n$.
The $\mcal{E}$ and $\mcal{I}$ are disjoint finite label sets.
Let $K$ denote the feasible set of \reff{prob:mop}.
Generally, there does not exist a point such that
all $f_i$'s are minimized simultaneously.
People often look for a point such that some or all of the objectives
cannot be further optimized. This leads to the following concepts
(see \cite{marler2004survey,miettinen2012nonlinear,vecopt2011}).

\begin{definition}
A point $x^*\in  K $ is said to be a Pareto point (PP) if there is no
$x \in  K$ such that $f_i(x) \le f_i(x^*)$ for all $i=1,\ldots,m$ and
$f_j(x) < f_j(x^*)$ for at least one $j$.
The point $x^*$ is said to be a weakly Pareto point (WPP)
if there is no $x \in K$ such that $f_i(x)< f_i(x^*)$ for all $i=1,\ldots,m$.
\end{definition}

In the literature,  Pareto points (resp., weakly Pareto points)
are also referenced as Pareto optimizers (resp., weakly Pareto optimizers),
or Pareto solutions (resp., weakly Pareto solutions).
A vector $v  \coloneqq  (v_1, \ldots, v_m)$ is called a
{\it Pareto value} (resp., weakly Pareto value)
for \reff{prob:mop} if there exists a Pareto point
(resp., weakly Pareto point) $x^*$
such that $v = f(x^*)$. {\it Pareto front}
is the set of objective values at Pareto points.
Every Pareto point is a weakly Pareto point,
while the converse is not necessarily true.
%
%
Detecting existence or nonexistence
of (weakly) Pareto points is a major task for MOPs. We refer to
\cite{bao2007variational,bao2010,KimPhamTuy19,
marler2004survey,miettinen2012nonlinear,vecopt2011}
for related work about existence of PPs and WPPs.

Scalarization is a classical method for finding PPs or WPPs.
It transforms a MOP into a single objective optimization problem.
A frequently used scalarization is
a nonnegative linear combination of objectives.

\begin{definition}\label{def:lsp}
The linear scalarization problem (LSP) for the MOP~\reff{prob:mop},
with a nonzero weight $w  \coloneqq  (w_1, \ldots, w_m) \ge 0$, is
\begin{equation} \label{prob:LSP1}
\baray{rl}
\min  &   w_1f_1(x) +\cdots + w_m f_m(x) \\
\st  &  x \in K .
\earay
\end{equation}
\end{definition}

{For the LSP \reff{prob:LSP1},
the optimization remains unchanged if we normalize the nonzero weight $w$
such that $\sum_{i=1}^m w_i=1,w_i\ge 0$.
For neatness of the paper, one can equivalently consider
nonzero and nonnegative weights for LSPs.}
Every minimizer of the LSP \reff{prob:LSP1}
is a weakly Pareto point for nonzero $w \ge 0$
and every minimizer is a Pareto point for $w > 0$.
Varying weights in \reff{prob:LSP1} may give different (weakly) Pareto points.
A nonzero weight $w$ is said to be {\it proper} if the LSP \reff{prob:LSP1}
is bounded below. Otherwise, the $w$ is called {\it improper}.
One wonders whether or not every Pareto point
is a minimizer of \reff{prob:LSP1} for some weight $w$.
However, this is sometimes not the case
(see \cite{fleming1986computer,zionts1988multiple}).
For instance, Example \ref{ex: CSP} has infinitely many Pareto points,
but only two of them can be obtained by solving LSPs.
Under some assumptions, LSPs may
give all Pareto points (see \cite{emmerich2018tutorial}).

Another frequently used scalarization is the Chebyshev scalarization.
It requires to use the minimum value of each objective.

\begin{definition}
The Chebyshev scalarization problem (CSP) for the MOP
\reff{prob:mop}, with a nonzero weight $w = (w_1, \ldots, w_m) \ge 0$, is
\begin{equation} \label{prob:CSP1}
\baray{rl}
\min  & \max\limits_{1 \le i \le m} \, w_i\big(f_i(x)- f_i^* \big)  \\
 \st  &  x \in K .
\earay
\end{equation}
where the minimum value $f_i^*  \coloneqq  \min\limits_{x\in  K } f_i(x) > -\infty$.
\end{definition}

Every minimizer of the CSP \reff{prob:CSP1}
is a weakly Pareto point. Interestingly, every weakly Pareto point
is the minimizer of a CSP for some weight
(see \cite{koski1987norm,miettinen2012nonlinear}).
However, the minimizer of a CSP may not be a Pareto point.
There also exist other scalarization methods,
such as the $\epsilon$-constraint method
\cite{anagnostopoulos2012online,matsatsinis2003agentallocator},
the lexicographic method \cite{clayton1982goal,jones2010practical}.
We refer to \cite{cho2017survey,donoso2016multi,marler2004survey,
miettinen2012nonlinear,ruiz1995characterization}
for different scalarizations.


There exists important work for MOPs given by polynomials.
When all functions are linear,
a semidefinite programming method is given
to obtain the set of Pareto points in \cite{blanco2014}.
When the functions are convex polynomials,
Moment-SOS relaxation methods are given to
compute (weakly) Pareto points in \cite{jiao2020,JLZ20,JiaoLee21,JLS21,LeeJiao18},
as well as some useful conditions for existence of (weakly) Pareto points.
{
Since the Pareto front is an image set of polynomial functions,
semidefinite relaxations can be used to approximate the Pareto front,
as in the work \cite{MDJ14,MDJ15}.
%
%
}

When the functions are nonconvex polynomials,
nonemptiness and boundedness of Pareto solution sets
are shown in \cite{LHF21}, under certain regularity conditions.
When the objectives are polynomials and $K$ is the entire space $\re^n$,
some novel conditions are shown for existence of
(weakly) Pareto points in \cite{KimPhamTuy19}.
%
%
The following questions are of great {interest} for studying MOPs:

\bit

\item What is a convenient description for the set of (weakly) Pareto values?
How can we represent the Pareto front
in a geometrically clean way?

\item For an LSP, how can we solve it efficiently for a Pareto point?
When the contraint $K$ is unbounded, how can we find a proper weight
such that the LSP is bounded?
How can we detect nonexistence of proper weights?

\item For a CSP, how can we solve it efficiently for a weakly Pareto point?
How do we get the global minimum value for each objective?
If some minimum value is $-\infty$,
how can we get a weakly Pareto point?

\item For a given point, how can we detect if it
is a (weakly) Pareto point?
How can we get a (weakly) Pareto point if LSPs/CSPs fail to give one?
How do we detect nonexistence of (weakly) Pareto points?

\eit

\subsection*{Contributions}

The above questions are the major topics of this paper.
When MOPs are given by polynomials,
there are special properties for them.
The following are our major contributions.

We study the convex geometry for (weakly) Pareto values.
The epigraph set, i.e., the set $\mcal{U}$ as in \reff{set:U},
is useful for (weakly) Pareto values.
We give a characterization for the Pareto front.
When the objectives are convex,
we show that the set of weakly Pareto values
can be expressed in terms of the boundary of a convex set.
When the MOP is given by SOS convex polynomials,
we show that $\mcal{U}$ can be given by semidefinite representations.
This is shown in Section~\ref{sc:char}.
%
%

For solving LSPs and CSPs, or detecting nonexistence of
(weakly) Pareto points, we often need to detect
whether or not an optimization problem is unbounded.
There exists few work for
detecting unboundedness in nonconvex optimization.
We give a convex relaxation method for
detecting unboundedness in polynomial optimization
under some genericity assumptions.
To the best of the authors' knowledge,
this is the first work that can achieve this goal.
The results are in Section~\ref{sc:unbounded}.

We discuss how to solve LSPs in Section~\ref{sc:LSP}.
Under a genericity assumption, we give a tight relaxation method for
solving LSPs and obtaining Pareto points.
When the feasible set $K$ is unbounded,
we show how to find proper weights such that the LSP is bounded below.
We also show how to detect that the LSP is unbounded below for all weights,
i.e., how to detect nonexistence of proper weights.

Section~\ref{sc:CSP} studies how to solve CSPs.
We first apply the tight relaxation method to compute global
minimum values $f_1^*, \ldots, f_m^*$ for the individual objectives.
After that, we formulate the CSP
equivalently as a polynomial optimization problem
and then solve it by using Moment-SOS relaxations.

Section~\ref{sc:detect} discusses how to detect if a given point
is a (weakly) Pareto point or not. This can be done
by solving certain polynomial optimization.
We also show how to detect existence or nonexistence of (weakly) Pareto points.
This requires to solve some moment feasibility problems.

We make some conclusions and propose some open questions in Section~\ref{sc:con}.
Section~\ref{sc:pre} reviews some basic results
for optimization with polynomials and moments.

\section{Preliminary}
\label{sc:pre}

\subsection*{Notation}
The symbol $\N$ (resp., $\re$, $\cpx$) denotes the set of
nonnegative integral (resp., real, complex) numbers.
The $\re^n_+$ stands for the nonnegative orthant, i.e.,
the set of nonnegative vectors.
For each label $i$, the $e_i$ denotes the vector of all zeros
excepts its $i$th entry being $1$,
while $e$ denotes the vector of all ones.
For an integer $k>0$, denote $[k]  \coloneqq  \{1,2,\ldots, k\}.$
For $t \in \re$, $\lceil t \rceil$ denotes
the smallest integer greater than or equal to $t$.
Denote by $\re[x]  \coloneqq  \re[x_1,\ldots,x_n]$
the ring of polynomials in $x \coloneqq (x_1,\ldots,x_n)$
with real coefficients. The $\re[x]_d$
stands for the set of polynomials in $\re[x]$
with degrees at most $d$.
For a polynomial $p$, $\deg(p)$ denotes its total degree,
$\tilde{p}$ denotes its homogenization, and $p^{hom} $
denotes the homogeneous part of the highest degree.
For $\af \coloneqq  (\af_1, \ldots, \af_n) \in \N^n$,
we denote $x^\af \,  \coloneqq  \, x_1^{\af_1} \cdots x_n^{\af_n}$
and $|\af| \coloneqq \af_1+\cdots+\af_n$.
The power set of degree $d$ is
\[
\N^n_d  \coloneqq \{ \af  \in \N^n \mid |\af|  \leq d \}.
\]
The vector of monomials in $x$ and up to degree $d$ is
\[
[x]_{d}  \coloneqq  \bbm 1 & x_1 &\cdots & x_n & x_1^2 & x_1x_2 & \cdots & x_n^{d}\ebm^T.
\]
The superscript $^T$ denotes the transpose of a matrix/vector.
The $I_N$ stands for the $N$-by-$N$ identity matrix.
By writing $X\succeq 0$ (resp., $X\succ 0$), we mean that
$X$ is a symmetric positive semidefinite
(resp., positive definite) matrix.
For a set $T$, $\cv{T}$ denotes its convex hull,
$\cl{T}$ denotes its closure, and
$\inte{T}$ denotes its interior, under the Euclidean topology.
The cardinality of $T$ is $|T|$.
For a vector $u$, the $\|u\|$ denotes its standard Euclidean norm.
%
%
For a function $h$ in $x$, the $\nabla h$ denotes
its gradient vector in $x$.
All computational results are shown with four decimal digits.

%
%

\subsection{Positive polynomials}
\label{ssc:posp}

A subset $I \subseteq \re[x]$ is an ideal if
$ I \cdot \re[x] \subseteq I$ and $I+I \subseteq I$. For a tuple
$p=(p_1,\ldots,p_k)$ of polynomials in $\re[x]$,
$\mbox{Ideal}(p)$ denotes the smallest ideal containing all $p_i$,
which is the set
$
p_1 \cdot \re[x] + \cdots + p_k  \cdot \re[x].
$
In computation, we often need to work with the {\it truncation} of degree $2k$:
\[
\ideal[p]_{2k}  \,  \coloneqq  \,
p_1 \cdot \re[x]_{2k-\deg(p_1)} + \cdots + p_k  \cdot \re[x]_{2k-\deg(p_k)}.
\]
A polynomial $\sig$ is said to be a sum of squares (SOS)
if $\sig = s_1^2+\cdots+ s_k^2$ for some polynomials $s_1,\ldots, s_k$.
Checking if a polynomial is SOS can be done by solving
a semidefinite program (SDP) \cite{Las01}.
If a polynomial is SOS, then it is nonnegative everywhere.
The set of all SOS polynomials in $x$ is denoted by $\Sig[x]$
and its $d$th truncation is
$
\Sig[x]_d  \coloneqq  \Sig[x] \cap \re[x]_d.
$
For a tuple $q=(q_1,\ldots,q_t)$ of polynomials,
its {\it quadratic module} is
\[
\qmod[q] \,  \coloneqq  \,  \Sig[x] + q_1 \cdot \Sig[x] + \cdots + q_t \cdot \Sig[x].
\]
The truncation of degree $2k$ for $\qmod[q]$ is
\[
\qmod[q]_{2k} \,  \coloneqq  \,
\Sig[x]_{2k} + q_1 \cdot \Sig[x]_{2k - \deg(g_1)}
+ \cdots + q_t \cdot \Sig[x]_{2k - \deg(q_t)}.
\]

A subset $A \subseteq \re[x]$ is said to be {\it archimedean}
if there exists $\sig \in A$ such that
$\sig(x) \geq 0$ defines a compact set in $\re^n $.
If $\ideal[p]+\qmod[q]$ is archimedean, then the set
$T  \coloneqq  \{x \in \re^n: p(x)=0, \, q(x)\geq 0 \}$ must be compact.
The reverse is not necessarily true.
However, if $T$ is compact, the archimedeanness
can be met by adding a redundant ball condition.
When $\ideal[p]+\qmod[q]$ is archimedean,
every polynomial that is positive on $T$
must belong to $\ideal[p]+\qmod[q]$.
This conclusion is referenced as Putinar's Positivstellensatz~\cite{Put}.
Furthermore, if a polynomial is nonnegative on $T$,
then it also belongs to $\ideal[p]+\qmod[q]$,
under some standard optimality conditions on its minimizers (see \cite{Nie-opcd}).

\subsection{Localizing and moment matrices}

Denote by $\re^{\N_d^n}$ the space of real sequences labeled by $\af \in \N_d^n$.
A vector $y  \coloneqq (y_\af)_{ \af \in \N_d^n}$ is called a
{\it truncated multi-sequence} (tms) of degree $d$.
It gives a linear functional on $\re[x]_d$ such as
\begin{equation}
\langle \sum\limits_{\af \in \N_d^n} f_\af x^\af , y \rangle
\,  \coloneqq   \, \sum\limits_{\af \in \N_d^n}  f_\af y_\af,
\end{equation}
where each $f_\af$ is a coefficient.
The tms $y$ is said to {\it admit} a {\it Borel measure} $\mu$ if
$y_\af = \int x^\af \mathtt{d} \mu$ for all $\af \in \N_d^n$.
If it exists, such $\mu$ is called a
{\it representing measure} for $y$ and $y$ is said to admit the measure $\mu$.
The support of $\mu$ is denoted as $\supp{\mu}$.
If the cardinality $|\supp{\mu}|$ is finite,
the measure $\mu$ is called {\it finitely atomic}.
It is called {\it $r$-atomic} if $|\supp{\mu}|=r$.

In optimization, the support of $\mu$ is often constrained in a set $K$.
For a degree $d$, denote the moment cone
\beq \label{set:Rd(K)}
\mathscr{R}_d(K)  \,  \coloneqq  \,
\Big\{ y \in \re^{ \N^n_d }:  \,
\exists \, \mu, \,  \, y = \int [x]_d \mathtt{d} \mu, \,
\supp{\mu} \subseteq K   \Big\}.
\eeq
The dual cone of $\mathscr{R}_d(K)$ is the nonnegative polynomial cone
\beq  \label{set:Pd(K)}
\mathscr{P}_d(K)  \,  \coloneqq  \,
\Big\{ p \in \re[x]_d: \, p(x) \ge 0 \, \forall \,x \in  K   \Big\}.
\eeq
The dual cone of $\mathscr{P}_d(K)$ is the closure of
$\mathscr{R}_d(K)$. When $K$ is compact,
the moment cone $\mathscr{R}_d(K)$ is closed.
We refer to \cite{LasBk15,Lau09}
for more details about moment cones.

Consider a polynomial $q \in \re[x]_{2k}$ with $\deg(q) \leq 2k$.
The $k$th {\it localizing matrix} of $q$,
generated by a tms $z \in \re^{\N^n_{2k}}$,
is the symmetric matrix $L_q^{(k)}[z]$ such that
\begin{equation}  \label{LocMat}
vec(a_1)^T \big( L_q^{(k)}[z] \big) vec(a_2)  \,=\,
\langle  q a_1 a_2, z \rangle
\end{equation}
for all $a_1,a_2 \in \re[x]_{k - \lceil \deg(q)/2 \rceil}$.
(The $vec(a_i)$ denotes the coefficient vector of $a_i$.)
When $q = 1$, $L_q^{(k)}[z]$ is called a {\it moment matrix} and we denote
\[
M_k[z] \, \coloneqq  \, L_{1}^{(k)}[z].
\]
The columns and rows of $L_q^{(k)}[z]$, as well as $M_k[z]$,
are labeled by $\af \in \N^n$ with $2|\af|  \leq 2k - \deg(q) $.
%
%

Each $y \in \mathscr{R}_d(K)$ can be extended to
a tms $z  \in \mathscr{R}_{2t}(K)$ such that $y = z|_d$,
where $d \le 2t$ and $z|_d$ denotes the truncation of $z$ with degree $d$:
\beq \label{truncation:z}
z|_d \,  \coloneqq  \, (z_\af)_{ \af \in \N^n_d }.
\eeq
%
%
When $K$ is the feasible set of \reff{prob:mop},
a necessary condition for $z \in \mathscr{R}_{2t}(K)$ is
\[
L_{c_i}^{(t)}[z] = 0 \,  (i \in \mcal{E}), \quad
L_{c_j}^{(t)}[z] \succeq 0 \,  (j \in \mcal{I}),
\]
while they may not be sufficient (see \cite{LasBk15,Lau09}).
However, if $z$ further satisfies
\begin{equation} \label{cond:FEC}
\rank \, M_{t-d_c}[z] \,= \,\rank \, M_t[z],  \qquad
\end{equation}
then $z$ admits a $r$-atomic measure supported in $K$,
with $r = \rank \, M_t[z]$. The above integer $d_c$ is the degree
\begin{equation} \label{deg:dc}
d_c \,  \coloneqq  \,\max \{ \lceil \deg(c_i)/2 \rceil :
i \in \mcal{E} \cup \mcal{I} \}.
\end{equation}
This condition \reff{cond:FEC} is called flat extension
(see \cite{CurFia96,CurFia05,HenLas05,Lau05}).
To get optimizers in computation,
the flat truncation is more frequently used (see \cite{nie2013certifying}).

Moment and localizing matrices are important tools for
solving polynomial optimization \cite{FNZ18,HenLas05,Las01,PMISDr}.
They are also useful in tensor decompositions \cite{NieGP17,NieZhang18}.
We refer to \cite{LasBk15,LasICM,Lau09,LauICM}
for the books and surveys about polynomial and moment optimization.

\section{Geometry of Pareto values}
\label{sc:char}

Recall that
a vector $v  \coloneqq  (v_1, \ldots, v_m)$ is a {\it Pareto value} (PV) if
there exists a Pareto point $x^*$ such that $v = f(x^*)$.
Similarly, $v$ is called a weakly Pareto value (WPV)
if $v = f(p)$ for a weakly Pareto point $p$.
PVs and WPVs are closely related to the epigraph set
\beq \label{set:U}
\mcal{U} \, \coloneqq  \, \{ u =(u_1, \ldots, u_m) \mid u_i \ge f_i(x),
\, \mbox{for some} \, x \in  K  \}.
\eeq
The image of the set $K$ under the objective vector $f=(f_1,\ldots,f_m)$ is
\[
f(K) \,  \coloneqq  \, \{ (f_1(x), \ldots, f_m(x)): \, x \in K \}.
\]
Then, $\mcal{U} \,= \, f(K) + \re_+^m$ and its convex hull
$\cv{ \mcal{U} }  = \cv{ f(K)  } + \re_+^m$.
If $K$ is convex and each objective $f_i$ is convex,
the set $\mcal{U}$ is also convex.
The converse is not necessarily true.
When $\mcal{U}$ is convex, every Pareto point
is a minimizer of some LSP (see \cite{emmerich2018tutorial}).
In this section, we study the geometry of PVs and discuss
how to characterize PVs and WPVs through the set $\mcal{U}$.

\subsection{Supporting hyperplanes}
\label{ssc:support}

For a nonzero vector $w \in \re^m$ and $b \in \re$, the set
\[
H  \, = \,  \{u \in \re^m: w^Tu = b\}
\]
is a supporting hyperplane for $\mcal{U}$ if
$
b = \inf_{u \in \mcal{U} }\, w^Tu.
$
The $w$ is the normal of $H$.
In particular, if there exists $v \in \mcal{U}$ such that
$w^Tu \ge w^Tv$ for all $u \in \mcal{U}$,
then $H$ is called a supporting hyperplane through $v$.
Since $\mcal{U}$ contains $f(x)+ \re_+^n$,
the normal $w$ must be nonnegative,
for $H$ to be a supporting hyperplane.

%
%

In MOP, people often use different orderings to define various minimizers.
We refer to \cite{marler2004survey,miettinen2012nonlinear,vecopt2011}
for general orderings in MOP. Here we introduce
the convenient lexicographical ordering, up to permutations.
Let $\pi$ be a permutation of $(1,\ldots, m)$.
For a set $T \subseteq \re^m$, construct the following chain of nesting subsets
\[
T = T_0 \supseteq T_1 \supseteq \cdots \supseteq T_m
\]
such that: for each $k = 1, \ldots, m$,
$T_k$ is the subset of vectors in $T_{k-1}$
whose $\pi(k)$th entry is the smallest.
If $T_m \ne \emptyset$, then each $v \in T_m$
is called a {\it $\pi$-minimal} point of $T$.
For $u, v \in T_m$, all the entries of $u,v$ must be the same,
so $u=v$ and hence $T_m$ consists of a single point,
if it is nonempty. In particular, if $T$ is compact,
then $T_m \ne \emptyset$ and it consists of a single point.

PVs and WPVs are characterized in the following.
Some of these results may already exist in the literature.
For convenience of readers, we summarize them together
and give direct proofs.

\begin{proposition} \label{pro:char:PV}
Let $\mcal{U}$ be as in $\reff{set:U}$.
For each $v \in f(K)$, we have:

\bit

\item [(i)]
The vector $v$ is a WPV if and only if
$v$ lies on the boundary of $\mcal{U}$.
Moreover, if $v$ is an extreme point of $\cv{\mcal{U}}$,
then $v$ is a PV.

\item [(ii)] Assume $\mcal{U}$ is convex.
If $v$ is a WPV, then there exists a supporting hyperplane for $\mcal{U}$
through $v$ whose normal is nonnegative, i.e., there exists
$0 \ne w \ge 0$ such that $w^Tu \ge w^Tv$ for all $u \in \mcal{U}$.

\item [(iii)]
Suppose $H=\{u: w^Tu = w^Tv \}$ is a supporting hyperplane for $\mcal{U}$
through $v$, with a normal vector $0 \ne w \ge 0$.
If $w > 0$, then $v$ is a PV. For $w$ with a zero entry,
if $u\in f(K)$ is a $\pi$-minimal point of $H\cap \mcal{U}$, then $u$ is a PV.
If $u \in f(K)$ is an extreme point of $H \cap \mcal{U}$,
then $u$ is also a PV.

\eit
\end{proposition}
\proof
(i) If $v$ lies on the boundary of $\mcal{U}$,
then there is no $p \in K$ such that
$f(p) < v$, so $v$ is a WPV.
If $v$ is an interior point of $\mcal{U}$,
then exist $p \in  K $ and $q \ge 0$ such that
$f(p) + q < v$, which denies that $v$ is a WPV.
This shows that $v$ is a WPV if and only if
$v$ lie on the boundary of $\mcal{U}$.

Next, suppose $v$ is an extreme point of $\cv{\mcal{U}}$.
Suppose otherwise that $v$ is not a PV,
then there exists $p \in  K$ such that
$
f(p) \le v,  f(p) \ne v.
$
This means that $v = f(p) + q$, for some $0 \ne q \in \re_+^m$. Hence
$
v = \frac{1}{2} f(p) + \frac{1}{2} ( f(p) + 2q),
$
which implies $v$ is not an extreme point of $\cv{\mcal{U}}$, a contradiction.
So $v$ is a PV.

(ii) If $v$ is a WPV, then $v$ lies on the boundary of $\mcal{U}$.
Since $\mcal{U}$ is convex, there is a supporting hyperplane for $\mcal{U}$
through $v$, i.e., there exists
$w \ne 0$ such that $w^Tu \ge w^Tv$ for all $u \in \mcal{U}$.
The set $\mcal{U}$ contains $v + \re_+^m$, so $w \ge 0$.

(iii) For the case $w>0$, the conclusion is obvious.
When $w$ has zero entries,
let $I = \{i \in [m]: w_i > 0 \}$.
To prove $u  \coloneqq  (u_1, \ldots, u_m)$ is a PV,
suppose $p \in K$ is such that
$
f(p) \le u .
$
Since $u \in H \cap \mcal{U}$,
$
w^T f(p) \le w^Tu = w^T v.
$
Also note that $w^T f(p) \ge w^T u$, since $H$ is a supporting hyperplane.
So we must have $w^T f(p) = w^T v$ and $f_i(p) = u_i$ for all $i \in I$.
Write that $u = f(p) + q$, for some $q \in \re_+^m$.
Note that $q_i = 0$ for all $i \in I$.
Since $u$ is a $\pi$-minimal point of $H \cap \mcal{U}$ and $f(p) \le u$,
the vector $f(p)$ is also a $\pi$-minimal point of $H \cap \mcal{U}$.
Hence $u = f(p)$, by the $\pi$-minimality.
This means that $u$ is a PV.

When $u$ is an extreme point of $H \cap \mcal{U}$,
we can prove that $u$ is a PV in the same way as for the item (i).
\endproof

We have the following remarks for Proposition~\ref{pro:char:PV}.

\bit

\item Not every WPV lies on the boundary of
$\cv{\mcal{U}}$.  For instance, consider
\beq \nn
\left\{ \baray{rl}
\min & (x_1, \, x_2) \\
\st &  x_1 \ge 0, x_2 \ge 0, \, x_1^2 + x_2^2 = 1.
\earay \right.
\eeq
For each $t \in (0,1)$, the point $(t, \sqrt{1-t^2})$ is a WPP (also a PP),
but it {does not lie} on the boundary of $\cv{\mcal{U}}$.

\item If $\mcal{U}$ is not convex, there may not exist a supporting hyperplane
through a WPV. For instance, in the above MOP,
for every $t \in (0,1)$, there is no supporting hyperplane
for $\cv{\mcal{U}}$ through $(t, \sqrt{1-t^2})$.

\item For the item (iii) of Proposition~\ref{pro:char:PV},
if $w$ has a zero entry, then $v$ may not be a Pareto value.
For instance, consider the unconstrained MOP
\[
\min \quad (x_1, \, x_2^2) .
\]
For $w = (0, 1)$ and $v = (0, 0)$,
the equation $w^Tu = 0$ gives a supporting hyperplane through $(0,0)$,
but $(0,0)$ is not a Pareto value.

\item If $v$ is a PV, it may not be an extreme point of
$\mcal{U}$ or $H \cap \mcal{U}$.
For instance, consider the MOP
\beq \nn
\left\{ \baray{rl}
\min & (x_1, \, x_2) \\
\st &  x_1 \ge 0, x_2 \ge 0, \, x_1 + x_2 = 1.
\earay \right.
\eeq
The set $\mcal{U} = \{ x_1 \ge 0, \, x_2 \ge 0, \, x_1 + x_2 \ge 1 \}$.
Clearly, for every $t \in (0,1)$, the vector $(t, 1-t)$
is a PV, but it is not an extreme point of $\mcal{U}$.
The hyperplane $H =\{x_1 + x_2 = 1\}$ supports $\mcal{U}$ at $(t, 1-t)$.
However,  $(t, 1-t)$ is not an extreme point of the intersection
$H \cap \mcal{U}$, for every $t \in (0,1)$.

\eit

\subsection{A convex representation}
\label{ssc:cvxRep}

When the feasible set $K$ is bounded,
there always exist supporting hyperplanes for $\mcal{U}$.
When $K$ is unbounded, they may or may not exist.
For given $v=(v_1, \ldots, v_m) \in f(K)$,
how do we determine if there is a supporting hyperplane through it?
For this purpose, we consider the linear optimization in $w_0 \in \re$
and $w = (w_1, \ldots, w_m) \in \re^m$:
\beq  \label{maxw0:nng}
\left\{ \baray{rl}
\omega^*  \coloneqq  \max  & w_0 \\
\st   & 1 - e^T w = 0, \,  w_i \ge w_0 \, ( i \in [m]), \\
    & \sum\limits_{i=1}^m w_i( f_i(x) - v_i) \ge 0 \quad \mbox{on} \quad K.
\earay \right.
\eeq
Clearly, there is a supporting hyperplane through $v$
if and only if the optimal value $\omega^* \ge 0$.
Let $d$ be the maximum degree of objectives $f_i$.
The third constraint in \reff{maxw0:nng} is equivalent to the membership
\[
\baray{c}
\sum\limits_{i=1}^m w_i( f_i(x) - v_i) \in \mathscr{P}_d(K),
\earay
\]
where $\mathscr{P}_d(K)$ is the nonnegative
polynomial cone as in \reff{set:Pd(K)}.
The dual cone of $\mathscr{P}_d(K)$ is the closure
$\mbox{cl}\big( \mathscr{R}_d(K) \big)$,
where $\mathscr{R}_d(K)$ is the moment cone as in \reff{set:Rd(K)}.
The dual optimization of \reff{maxw0:nng} can be shown to be
\beq \label{min:mu:yinRd}
\left\{ \baray{rl}
\min  & t \\
\st & t - \langle f_i-v_i, y \rangle \ge 0 \, (i \in [m]), \\
    & 1 = m t - \sum\limits_{i=1}^m \langle f_i-v_i, y \rangle , \,
    \, y \in \mbox{cl} \big( \mathscr{R}_d(K) \big).
\earay \right.
\eeq
In the above, the vector $y$ is a tms labeled as
\[ y \, = \, (y_\af)_{ \af \in \N^n_d } . \]
If \reff{min:mu:yinRd} has a feasible point with $t < 0$,
then there are no nonnegative supporting hyperplanes through $v$.
Since each $v_i$ is a scalar, one can see that
\[
\langle f_i-v_i, y \rangle =
\langle f_i, y \rangle - v_i \langle 1, y_0 \rangle
= \langle f_i, y \rangle - v_i y_0.
\]
When $t<0$ is feasible for \reff{min:mu:yinRd},
there also exists a feasible $y \in \mathscr{R}_d(K)$ with $y_0 > 0$.
One can scale such $(t,y)$ so that $y_0=1$.
Hence, the existence of $t<0$ in \reff{min:mu:yinRd} is equivalent to
\beq \nn
\left\{ \baray{l}
 \tau  = m t' - \sum\limits_{i=1}^m (\langle f_i, y \rangle -v_i),  \\
 t' \ge \langle f_i, y \rangle - v_i , \, i =1,\ldots, m,  \\
\tau >0 > t', \, y_0 = 1, \,  y \in \mathscr{R}_d(K).
\earay \right.
\eeq
The above is then equivalent to that
\[
\left\{ \baray{l}
v_i  >  \langle f_i, y \rangle, \, i =1,\ldots, m,  \\
y_0 = 1,  \, y \in \mathscr{R}_d(K).
\earay \right.
\]
We define the set $\mcal{V}$ containing all such $v$:
\beq  \label{convex:notPV}
\mcal{V} \,  \coloneqq  \,
\left\{ v \left| \baray{l}
v = (v_1, \ldots, v_m) \\
v_i  >  \langle f_i, y \rangle, \, i =1,\ldots, m,  \\
y_0 = 1,  \, y \in \mathscr{R}_d(K)
\earay \right. \right\}.
\eeq
%
%

\begin{theorem} \label{th:intU:WPV}
Assume $K$ has nonempty interior.
Then, the interior of the convex hull $\cv{ \mcal{U} }$
is the set $\mcal{V}$ as in \reff{convex:notPV}.
Moreover, when $\mcal{U}$ is convex, a vector $v \in f(K)$
is a weakly Pareto value if and only if $v$
belongs to the boundary of the closure $\mbox{cl}\big( \mcal{V} \big)$.
\end{theorem}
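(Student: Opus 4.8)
The plan is to give a direct geometric description of $\mc{V}$, identify it with $\inte{\cv{\mc{U}}}$, and then read off the boundary characterization from Proposition~\ref{pro:char:PV}(i). First I would unwind the definition of $\mc{V}$ in \reff{convex:notPV}. A tms $y \in \mathscr{R}_d(K)$ with $y_0 = 1$ is precisely $y = \int [x]_d \mathtt{d}\mu$ for a Borel probability measure $\mu$ supported in $K$, and since $\deg f_i \le d$ we have $\langle f_i, y\rangle = \int f_i \mathtt{d}\mu$. Writing $F(\mu) := (\int f_1 \mathtt{d}\mu, \ldots, \int f_m \mathtt{d}\mu)$ and $C := \{ F(\mu) : \mu \text{ a probability measure on } K \}$, this gives $\mc{V} = C + \inte{\re_+^m}$. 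The basic sandwich $\cv{f(K)} \subseteq C \subseteq \cl{\cv{f(K)}}$ then holds: atomic measures $\mu = \sum_j \lambda_j \delta_{x_j}$ realize every convex combination $\sum_j \lambda_j f(x_j)$, giving the left inclusion, while the barycenter of any probability measure lies in the closed convex hull of the support of its pushforward, giving the right one.

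Next I would compute $\inte{\cv{\mc{U}}}$. Since $\cv{\mc{U}} = \cv{f(K)} + \re_+^m$ is convex and $\inte{\re_+^m}\ne\emptyset$, the elementary identity $\inte{A + \re_+^m} = A + \inte{\re_+^m}$ holds for convex $A$ (the inclusion $\supseteq$ is immediate; for $\subseteq$, if $u$ is interior then $u-\epsilon e \in A + \re_+^m$ for small $\epsilon>0$, so $u \in A + \inte{\re_+^m}$). Hence $\inte{\cv{\mc{U}}} = \cv{f(K)} + \inte{\re_+^m}$. Comparing with $\mc{V} = C + \inte{\re_+^m}$, the inclusion $\cv{f(K)} \subseteq C$ gives one direction; for the reverse, if $v_i > c_i$ for all $i$ with $c \in C \subseteq \cl{\cv{f(K)}}$, I pick $a \in \cv{f(K)}$ with $|a_i - c_i| < v_i - c_i$ for all $i$, so $v_i > a_i$ and $v \in \cv{f(K)} + \inte{\re_+^m}$. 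This proves $\mc{V} = \inte{\cv{\mc{U}}}$, the first assertion; the hypothesis that $K$ has nonempty interior is what keeps these sets genuinely full-dimensional and secures the strong duality behind \reff{min:mu:yinRd}.

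For the second assertion, assume $\mc{U}$ is convex, so $\mc{U} = \cv{\mc{U}}$ and $\mc{V} = \inte{\mc{U}}$ by the first part. For a convex set with nonempty interior one has $\cl{\inte{\mc{U}}} = \cl{\mc{U}}$ and $\inte{\cl{\mc{U}}} = \inte{\mc{U}}$; applying these to $\mc{V} = \inte{\mc{U}}$ yields $\cl{\mc{V}} = \cl{\mc{U}}$ and $\inte{\cl{\mc{V}}} = \mc{V}$. Consequently the boundary satisfies $\cl{\mc{V}} \setminus \inte{\cl{\mc{V}}} = \cl{\mc{U}} \setminus \inte{\mc{U}}$, that is, the boundary of $\cl{\mc{V}}$ coincides with the boundary of $\mc{U}$. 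By Proposition~\ref{pro:char:PV}(i) a point $v \in f(K)$ is a weakly Pareto value exactly when it lies on the boundary of $\mc{U}$, which is now the same as lying on the boundary of $\cl{\mc{V}}$, as claimed.

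The step I expect to be the main obstacle is keeping the two sources of non-closedness under control: the moment cone $\mathscr{R}_d(K)$ need not be closed, and $C$ may strictly exceed $\cv{f(K)}$, sitting only inside its closure, when $K$ is unbounded. The strict inequalities defining $\mc{V}$ are precisely what absorb both gaps, so the delicate point is the sandwich argument showing that enlarging $\cv{f(K)}$ to $C$ does not change $C + \inte{\re_+^m}$, together with verifying that the representation $\langle f_i, y\rangle = \int f_i \mathtt{d}\mu$ and the barycenter inclusion remain valid without any compactness of $K$.
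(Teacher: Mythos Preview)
Your argument is correct and takes a genuinely different route from the paper. The paper proves $\inte{\cv{\mc U}}=\mc V$ via conic duality: it characterizes $v\in\inte{\cv{\mc U}}$ as the nonexistence of a nonnegative supporting hyperplane, encodes this as the linear conic program \eqref{maxw0:nng} being infeasible or having negative optimal value, invokes strong duality between \eqref{maxw0:nng} and \eqref{min:mu:yinRd} (this is where the hypothesis $\inte{K}\ne\emptyset$ is used, to guarantee $\mathscr{R}_d(K)$ has interior and hence strict feasibility of the dual), and then rewrites the dual condition $t<0$ as membership in $\mc V$. You bypass duality entirely: you read $\{y\in\mathscr{R}_d(K):y_0=1\}$ as probability measures on $K$, write $\mc V=C+\inte{\re_+^m}$ for the barycenter set $C$, use the sandwich $\cv{f(K)}\subseteq C\subseteq\cl{\cv{f(K)}}$, and combine with the elementary identity $\inte{A+\re_+^m}=A+\inte{\re_+^m}$. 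Your approach is more elementary and in fact works under the weaker hypothesis $K\ne\emptyset$; your remark that the interior assumption ``secures the strong duality behind \eqref{min:mu:yinRd}'' is true of the paper's proof but plays no role in yours, so you can drop it. The paper's route, by contrast, sits naturally inside its moment--SOS framework and makes explicit why the dual problem \eqref{min:mu:yinRd} is the relevant object. For the second assertion both proofs proceed identically, reducing to Proposition~\ref{pro:char:PV}(i) via the standard identities $\cl{\inte{\mc U}}=\cl{\mc U}$ and $\inte{\cl{\mc U}}=\inte{\mc U}$ for convex sets with nonempty interior.
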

\proof
Since $K$ has nonempty interior, the cone $\mathscr{R}_d(K)$
has nonempty interior. Hence, the strong duality holds
between \reff{maxw0:nng} and \reff{min:mu:yinRd},
since \reff{min:mu:yinRd} has strictly feasible points.
This is because one can select $y$ from the interior of $\mathscr{R}_d(K)$,
choose $t$ sufficiently large to satisfy all the inequalities,
and then scale such $(t,y)$ for the equality to hold.

A point $v$ lies in the interior of
$\cv{ \mcal{U} }$ if and only if there is no supporting hyperplane
for $\mcal{U}$ through it. The normal of every
supporting hyperplane for $\mcal{U}$ is nonnegative.
Thus,  $v$ lies in the interior of $\cv{ \mcal{U} }$ if and only if
the optimal value $\omega^*$ of \reff{maxw0:nng}
is negative or it is infeasible. By the strong duality
between \reff{maxw0:nng} and \reff{min:mu:yinRd},
this is equivalent to that $v$ belongs to $\mcal{V}$.

When $\mcal{U}$ is convex, i.e., $\mbox{conv}(\mcal{U}) = \mcal{U}$,
a vector $v \in f(K)$ is a WPV if and only if $v$
lies on the boundary of $\mcal{U}$, by Proposition~\ref{pro:char:PV}.
This is equivalent to that
$v$ lies on the boundary of $\mbox{cl}\big( \mcal{V} \big)$,
since the interior of $\mcal{U}$ is $\mcal{V}$.
 \endproof

A computational efficient description for the moment cone
$\mathscr{R}_d(K)$ is usually not available.
However, when the polynomials are SOS-convex,
there exists a semidefinite representation for the set $\mcal{V}$
in \reff{convex:notPV}. Recall that
a polynomial $p \in \re[x]$ is SOS-convex
(see \cite{HelNie10}) if $\nabla^2 p = Q(x)^TQ(x)$
for some matrix polynomial $Q(x)$.

\begin{theorem} \label{th:charWPV:SOS}
Assume $\mcal{E}=\emptyset$ and $K$ has nonempty interior.
If all $f_i$ and $-c_j$ ($j \in \mcal{I}$) are SOS-convex polynomials,
then the interior of $\mcal{U}$ is equal to
\beq  \label{UPV:SOSCVX}
\mcal{V}_1 \, \coloneqq  \,
\left\{ (v_1,\ldots, v_m)
\left| \baray{l}
\langle c_j, y \rangle \ge 0 \, (j \in \mcal{I}), \\
v_i > \langle f_i, y \rangle  \, (i \in [m]),  \\
 M_{d_0}[y] \succeq 0, \, y_0 = 1,    \\
  y \in \re^{\N^n_{2d_0}}
\earay \right. \right\},
\eeq
where $d_0  \coloneqq  \max\{ \lceil d/2 \rceil,
\lceil \deg(c_j)/2 \rceil (j \in \mcal{I}   \}$.
Moreover, a vector $v \in f(K)$ is a weakly Pareto value
if and only if it lies on the boundary of
$\mbox{cl}\big( \mcal{V}_1 \big)$.
\end{theorem}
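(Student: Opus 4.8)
The plan is to deduce this from Theorem~\ref{th:intU:WPV} by showing that, under the SOS-convex hypotheses, the semidefinite set $\mc{V}_1$ of \reff{UPV:SOSCVX} coincides with the moment set $\mc{V}$ of \reff{convex:notPV}. First I would record that when all $f_i$ and all $-c_j$ ($j \in \mc{I}$) are SOS-convex, each $f_i$ is convex and $K$ is convex, so $\mc{U} = f(K) + \re_+^m$ is convex; in particular $\cv{\mc{U}} = \mc{U}$ and hence $\inte{\mc{U}} = \inte{\cv{\mc{U}}}$. Since $K$ has nonempty interior, Theorem~\ref{th:intU:WPV} identifies the latter with $\mc{V}$. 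Thus it suffices to prove $\mc{V} = \mc{V}_1$, after which the description of weakly Pareto values will follow from Proposition~\ref{pro:char:PV}(i) exactly as in the proof of Theorem~\ref{th:intU:WPV}.

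For the inclusion $\mc{V} \subseteq \mc{V}_1$, take $v \in \mc{V}$ with a witness $y \in \mathscr{R}_d(K)$ satisfying $y_0 = 1$ and $\langle f_i, y\rangle < v_i$. By Tchakaloff's theorem, $y$ is represented by a finitely atomic measure $\nu$ supported in $K$, which then has finite moments of all orders; set $z := \int [x]_{2d_0}\, \mathtt{d}\nu$. Then $z_0 = 1$, the matrix $M_{d_0}[z] \succeq 0$ because $z$ admits a measure, $\langle c_j, z\rangle = \int c_j\, \mathtt{d}\nu \ge 0$ since $\supp{\nu}\subseteq K$, and $\langle f_i, z\rangle = \langle f_i, y\rangle < v_i$ since $\deg f_i \le d \le 2d_0$. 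Hence $z$ witnesses $v \in \mc{V}_1$. This direction uses no convexity.

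The crux is the reverse inclusion $\mc{V}_1 \subseteq \mc{V}$, where SOS-convexity enters through a Jensen-type inequality for pseudo-moments: if $p$ is SOS-convex with $\deg p \le 2d_0$ and $M_{d_0}[y] \succeq 0$, $y_0 = 1$, then $\langle p, y\rangle \ge p(\hat{x})$, where $\hat{x} := (y_{e_1}, \ldots, y_{e_n})$ is the vector of first moments. This is Lasserre's inequality for SOS-convex polynomials, and the degree bounds built into $d_0 = \max\{\lceil d/2\rceil, \lceil \deg(c_j)/2 \rceil \, (j \in \mc{I})\}$ are precisely what make it applicable to every $f_i$ and every $c_j$ simultaneously. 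Given $v \in \mc{V}_1$ with witness $y$, I would apply the inequality to each SOS-convex $-c_j$ to get $c_j(\hat{x}) \ge \langle c_j, y\rangle \ge 0$, so that $\hat{x} \in K$, and to each $f_i$ to get $f_i(\hat{x}) \le \langle f_i, y\rangle < v_i$. The Dirac measure $\delta_{\hat{x}}$ then yields $\tilde{y} := [\hat{x}]_d \in \mathscr{R}_d(K)$ with $\tilde{y}_0 = 1$ and $\langle f_i, \tilde{y}\rangle = f_i(\hat{x}) < v_i$, so $v \in \mc{V}$.

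With $\mc{V} = \mc{V}_1$ established, we obtain $\inte{\mc{U}} = \mc{V}_1$. For the final assertion, since $\mc{U}$ is convex with nonempty interior $\mc{V}_1$, we have $\cl{\mc{U}} = \cl{\mc{V}_1}$ and the two sets share the same boundary; by Proposition~\ref{pro:char:PV}(i) a vector $v \in f(K)$ is a weakly Pareto value if and only if it lies on the boundary of $\mc{U}$, hence if and only if it lies on the boundary of $\cl{\mc{V}_1}$. The main obstacle is the Jensen inequality step: one must invoke (or reprove) that SOS-convexity forces $\langle p, y\rangle \ge p(\hat{x})$ for an arbitrary positive semidefinite pseudo-moment sequence, and track the degree requirement carefully so that the single parameter $d_0$ accommodates all objectives and all constraints at once.
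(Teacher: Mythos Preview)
Your proposal is correct and follows essentially the same route as the paper: reduce to Theorem~\ref{th:intU:WPV} by proving $\mc{V}=\mc{V}_1$, with the reverse inclusion handled via Lasserre's Jensen inequality for SOS-convex polynomials applied at the first-moment point $\hat{x}$. Your treatment of the forward inclusion (extending $y\in\mathscr{R}_d(K)$ to degree $2d_0$ via a finitely atomic representing measure) and your explicit observation that SOS-convexity forces $\mc{U}$ to be convex are slightly more careful than the paper's terse ``clearly'' and implicit appeal to Theorem~\ref{th:intU:WPV}, but the argument is the same.
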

\proof
Clearly, if $v$ belongs to $\mcal{V}$ as in \reff{convex:notPV}
for some $y\in \mathscr{R}_d(K)$, then it must belong to $\mcal{V}_1$.
Conversely, if $(v, y)$ satisfies \reff{UPV:SOSCVX},
then let $\hat{x}  \coloneqq  (y_{e_1}, \ldots, y_{e_n})$ and $\hat{y}  \coloneqq  [\hat{x} ]_d$.
Under the SOS convexity assumption,
the Jensen's inequality (see \cite{Las09}) implies that
\[
\langle f_i, y \rangle \ge f_i(\hat{x} ) = \langle f_i, \hat{y} \rangle, \quad
0 \ge  \langle -c_j, y \rangle \ge -c_j(\hat{x} ) = \langle -c_j, \hat{y} \rangle .
\]
So we have $\hat{x} \in K$ and $\hat{y} \in \mathscr{R}_d(K)$,
hence $v$ belongs to $\mcal{V}_1$.
The conclusion then follows from Theorem~\ref{th:intU:WPV}.
 \endproof

\begin{example}\label{ex:soscvx boundary}
Consider the SOS-convex polynomials
\[
\baray{c}
f_1 =  (x_1-x_2)^4+(x_2-x_3)^4, \,\,
f_2 = \sum\limits_{i=1}^3x_i^4 + x_1^2x_2^2+x_1^2x_3^2+x_2^2x_3^2,
\earay
\]
and the ball constraint $1 \ge \|x\|^2 $. One can verify that
\[
\baray{l}
    \nabla^2 f_1 =12 \begin{bmatrix}
      x_1-x_2 &  0 \\
      x_2-x_1 & x_2-x_3\\
      0 &  x_3-x_2
    \end{bmatrix}\begin{bmatrix}
      x_1-x_2 &  0 \\
      x_2-x_1 & x_2-x_3\\
      0 &  x_3-x_2
    \end{bmatrix}^T,  \\
    \nabla^2 f_2 = 4\begin{bmatrix}
      x_1 & x_1 & 0 \\
      x_2 & 0 & x_2 \\
      0 & x_3 & x_3
    \end{bmatrix}
    \begin{bmatrix}
      x_1 & x_1 & 0 \\
      x_2 & 0 & x_2 \\
      0 & x_3 & x_3
    \end{bmatrix}^T + \sum\limits_{i=1}^3 A_iA_i^T,
\earay
\]
where each $A_i$ is the diagonal matrix with the diagonal vector
$\sqrt{2}x_i(e+(\sqrt{2}-1)e_i)$. Note that $y_{000}=1$.
The inequalities in the set $\mcal{V}_1$ as in \reff{UPV:SOSCVX} are
\[
\baray{l}
1 - y_{200} - y_{020} - y_{020} \ge 0, \,\,
 \\
v_1 > \sum\limits_{i=0}^4 {4 \choose i} (-1)^i(y_{(4-i)e_1+ie_2}+y_{(4-i)e_2+ie_3}),\\
v_2 > \sum\limits_{i=1}^3   y_{4e_i}+ y_{220} + y_{022}+y_{202}. \\
\earay
\]
The moment matrix inequality $M_2[y]\succeq 0$ reads as
{
\[
\left[
\begin{array}{cccccccccc}
y_{000} & y_{100}  & y_{010} & y_{001} & y_{200} & y_{110} & y_{101} & y_{020} & y_{011} & y_{002} \\
y_{100} & y_{200}  & y_{110} & y_{101} & y_{300} & y_{210} & y_{201} & y_{120} & y_{111} & y_{102} \\
y_{010} & y_{110}  & y_{020} & y_{011} & y_{210} & y_{120} & y_{111} & y_{030} & y_{021} & y_{012} \\
y_{001} & y_{101}  & y_{011} & y_{002} & y_{201} & y_{111} & y_{102} & y_{021} & y_{012} & y_{003} \\
y_{200} & y_{300}  & y_{210} & y_{201} & y_{400} & y_{310} & y_{301} & y_{220} & y_{211} & y_{202} \\
y_{110} & y_{210}  & y_{120} & y_{111} & y_{310} & y_{220} & y_{211} & y_{130} & y_{121} & y_{112} \\
y_{101} & y_{201}  & y_{111} & y_{102} & y_{301} & y_{211} & y_{202} & y_{121} & y_{112} & y_{103} \\
y_{020} & y_{120}  & y_{030} & y_{021} & y_{220} & y_{130} & y_{121} & y_{040} & y_{031} & y_{022} \\
y_{011} & y_{111}  & y_{021} & y_{012} & y_{211} & y_{121} & y_{112} & y_{031} & y_{022} & y_{013} \\
y_{002} & y_{102}  & y_{012} & y_{003} & y_{202} & y_{112} & y_{103} & y_{022} & y_{013} & y_{004} \\
\end{array}
\right] \succeq 0.
\]}
\end{example}


{
We would like to remark that
the Pareto front can be expressed as an image set of polynomial functions.
Thus, semidefinite relaxations can be used to approximate the Pareto front.
We refer to \cite{MDJ14,MDJ15} for related work on this technique.
In contrast, our work expresses the Pareto front
in terms of the boundary of sets $cl(\mcal{V})$ in \reff{convex:notPV}
or $cl(\mcal{V}_1)$ in \reff{UPV:SOSCVX}.
In comparison, the expression for the Pareto front via
$cl(\mcal{V})$ or $cl(\mcal{V}_1)$ in our work is exact but more for theoretical interest,
while the expression in \cite{MDJ14} is approximate
but more for computational interest.
}

\section{The linear scalarization}
\label{sc:LSP}

This section discusses how to solve linear scalarization problems,
how to choose proper weights, and how to detect nonexistence of proper weights.
For a weight $w  \coloneqq  (w_1, \ldots, w_m)$, denote the weighted sum
\[
f_w(x)  \coloneqq  w_1 f_1(x)+ \cdots +w_m f_m(x).
\]
We consider the LSP
\beq \label{prob:LSP}
\min \quad  f_w(x)  \quad \st \quad x \in K.
\eeq
Recall that $w \ne 0$ is a proper weight if \reff{prob:LSP} is bounded below.
Equivalently, $w$ is a proper weight if and only if
$w$ is the normal of a supporting hyperplane for the set
$\mcal{U}$ as in \reff{set:U}.

\subsection{Tight relaxations for LSPs}
\label{ssc:momLSP}

The Moment-SOS hierarchy of semidefinite relaxations \cite{Las01}
can be applied to solve \reff{prob:LSP}.
When the feasible set $K$ is unbounded, the Moment-SOS hierarchy may not converge.
Here, we apply the tight relaxation method in \cite{Tight19} to solve \reff{prob:LSP}.

The Karush-Kuhn-Tucker (KKT) conditions for \reff{prob:LSP} are
\[
\nabla f_w(u)  = \sum\limits_{ i \in \mcal{E} \cup \mcal{I} } \lambda_i \nabla c_i(u), \quad
   \lmd_j \ge 0,\, \lmd_j c_j(u) = 0 \, (j \in \mcal{I} ),
\]
where the $\lmd_j$'s are Lagrange multipliers.
For convenience, we write such that
\[
\baray{c}
\mcal{E} \cup \mcal{I} \,=\,  \{ 1, \ldots, s\}, \quad
c \,  \coloneqq  \, (c_1(x), \ldots, c_s(x)), \\
c_{eq} \,  \coloneqq  \, (c_i)_{ i \in \mcal{E} },  \quad
c_{in} \,  \coloneqq  \, (c_j)_{ j \in \mcal{I} }.
\earay
\]
The KKT conditions imply that
\beq \label{mat:C(x)}
\underbrace{\bbm
\nabla c_1(x) & \nabla c_2(x) & \cdots & \nabla c_s(x) \\
c_1(x) & 0  & \cdots & 0 \\
 0     & c_2(x) & \cdots   & 0 \\
\vdots &  \vdots & \ddots & \vdots \\
 0     &  0  & \cdots &  c_s(x) \\
\ebm}_{C(x)}
\underbrace{\bbm \lmd_1 \\ \vdots \\ \lmd_s \ebm}_{\lmd}
=
\bbm \nabla f_w(x) \\ 0 \\ \vdots \\ 0 \ebm.
\eeq
The polynomial tuple $c$ is said to be {\it nonsingular}
if the matrix $C(x)$ as above has full column rank for all complex
$x \in \cpx^n$ (see \cite{Tight19}).
When $c$ is nonsingular, there exists a matrix polynomial $L(x)$
such that $L(x) C(x) = I_s$. Then
\[
\lmd = L(x) \bbm \nabla f_w(x) \\ 0 \ebm .
\]
For each $i=1,\ldots,s$, let
$\lmd_i(x)   \coloneqq   \big( L(x)_{:, 1:n} \nabla f_w(x)\big)_i$
be the $i$th entry polynomial. Denote the polynomial sets
\begin{align}
\label{set:Phi}
&\Phi  \coloneqq   \big \{ c_i \big\}_{ i \in \mcal{E} } \cup
 \big \{\lmd_j(x) c_j \big \}_{ j \in \mcal{I} }
 \cup \big\{ \nabla f_w - \sum\limits_{i\in \mcal{E} \cup \mcal{I} } \lambda_i(x) c_i \big\}, \\
 \label{set:Psi}
&\Psi  \coloneqq  \{c_j, \, \lmd_j(x)  \}_{ j \in \mcal{I} } .
\end{align}
(If $p$ is a vector of polynomials,
then $\{ p \}$ denotes the set of entries of $p$.)
If its minimum value is achieved at a KKT point,
then \reff{prob:LSP} is equivalent to
\beq  \label{prob:LSP KKT}
\left\{ \begin{array}{rl}
\min  & f_w(x) \\
\st  &  p(x) = 0 \, (p \in \Phi), \\
     &  q(x) \ge 0 \, (q \in \Psi). \\
\end{array} \right.
\eeq
Let
$k_0  \coloneqq  \max \{ \lceil \deg(p)/2 \rceil: p \in \Phi \cup \Psi \}$.
For an integer $k \ge k_0$, the $k$th order moment relaxation is
\begin{equation} \label{LSP:mom:kth}
\left\{ \begin{array}{rl}
\min &  \langle f_w,y \rangle \\
\st   &  L_{p}^{(k)}[y] = 0 \, (p \in \Phi), \\
      & L_{q}^{(k)}[y] \succeq 0 \, (q \in \Psi), \\
      &  M_k[y] \succeq 0, \\
      &  y_0 = 1, \, y \in \re^{ \N^n_{2k} }.
\end{array} \right.
\end{equation}
For $k =k_0, k_0+1, \ldots$,
the relaxation \reff{LSP:mom:kth} is a semidefinite program.
The following is the algorithm for solving \reff{prob:LSP KKT}.

\begin{alg} \label{alg:LSP}
Formulate the sets $\Phi, \Psi$ as in \reff{set:Phi}-\reff{set:Psi}.
Let $k  \coloneqq  k_0$.
\bit

\item [Step~1]
Solve the relaxation \reff{LSP:mom:kth}
for a minimizer $y^*$ and let $t \coloneqq k_0$.

\item [Step~2]
If $y^*$ satisfies the rank condition
\beq \label{eq:flatranklow}
 \rank{M_t[y^*]} \,=\, \rank{M_{t-k_0}[y^*]} ,
\eeq
then extract
$r  \coloneqq \rank{M_t[y^*]}$ minimizers for \reff{prob:LSP KKT}.

\item [Step~3]
If \reff{eq:flatranklow} fails to hold and $t < k$,
let $t  \coloneqq  t+1$ and then go to Step~2;
otherwise, let $k  \coloneqq  k+1$ and go to Step~1.

\eit
\end{alg}

The rank condition \reff{eq:flatranklow}
is called {\it flat truncation}.
It is a sufficient (and almost necessary) condition for checking convergence
of the Moment-SOS hierarchy \cite{nie2013certifying}. The Algorithm~\ref{alg:LSP}
can be implemented in {\tt GloptiPoly~3}~\cite{GloPol3}.
The following is the convergence property for the hierarchy of
relaxations \reff{LSP:mom:kth},
which follows from \cite[Theorem~4.4]{Nie2018saddle}.

\begin{theorem}  \label{thmcvg:momLSP}
Assume $c$ is nonsingular and the LSP~\reff{prob:LSP}
has a minimizer for the weight $w$.
Then, for all $k$ large enough,
the optimal value of the relaxation \reff{LSP:mom:kth}
is equal to that of \reff{prob:LSP}.
Moreover, under either one of the following conditions
\bit

\item [(i)]
the set $\ideal[\Phi]+\qmod[\Psi]$ is archimedean, {\bf or}

\item [(ii)]
the real zero set of polynomials in $\Phi$ is finite,

\eit
if each minimizer of \reff{prob:LSP} is an isolated critical point,
then all minimizers of the relaxation \reff{LSP:mom:kth}
must satisfy \reff{eq:flatranklow}, when $k$ is big enough.
Therefore, Algorithm~\ref{alg:LSP} must terminate
within finitely many loops.
\end{theorem}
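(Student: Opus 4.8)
The plan is to deduce both conclusions from the general tight-relaxation theorem \cite[Theorem~4.4]{Nie2018saddle}, after verifying that \reff{prob:LSP KKT} is an exact KKT reformulation of the LSP \reff{prob:LSP} and that \reff{LSP:mom:kth} is nothing but its Moment-SOS relaxation. So the work splits into: (a) proving that \reff{prob:LSP} and \reff{prob:LSP KKT} have the same optimal value and the same minimizer set, and (b) citing the convergence and flat-truncation conclusions of the cited theorem for the polynomial optimization problem \reff{prob:LSP KKT}.

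For (a), the crucial point is that nonsingularity of $c$ forces a constraint qualification. At any feasible $u$, the bottom $s\times s$ block of $C(x)$ in \reff{mat:C(x)} is $\diag(c_1(u),\ldots,c_s(u))$, so a linear dependence among the columns forces the coefficients attached to inactive inequalities to vanish, after which the top $n$ rows force the gradients of the equality and active inequality constraints to be linearly independent. Thus full column rank of $C(u)$ gives the linear independence constraint qualification (LICQ) at every feasible point, so each local minimizer of \reff{prob:LSP} is a KKT point and, in particular, the minimum is attained at a KKT point, as the reformulation requires. Nonsingularity also supplies a polynomial left inverse $L(x)$ with $L(x)C(x)=I_s$, and evaluating $\lmd = L(x)\bbm \nabla f_w(x) \\ 0 \ebm$ at a KKT point recovers the true multipliers; hence the polynomial expressions $\lmd_i(x)$ agree with the Lagrange multipliers there, and the constraints $p=0$ ($p\in\Phi$) and $q\ge0$ ($q\in\Psi$) of \reff{prob:LSP KKT} exactly encode feasibility, stationarity, complementarity, and sign. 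Therefore the two problems share the same optimal value and minimizer set.

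For (b), I would observe that \reff{LSP:mom:kth} is literally the $k$th moment relaxation of \reff{prob:LSP KKT}, so \cite[Theorem~4.4]{Nie2018saddle} applies directly: the relaxation is tight, meaning its optimal value equals that of \reff{prob:LSP KKT}, hence that of \reff{prob:LSP}, for all large $k$, which is the first assertion. Under the archimedean hypothesis (i) or the finiteness hypothesis (ii) on the real variety of $\Phi$, together with the assumption that each minimizer is an isolated critical point, the same theorem gives that every minimizer of \reff{LSP:mom:kth} satisfies the flat-truncation rank condition \reff{eq:flatranklow} once $k$ is large; since this is precisely the stopping test in Step~2 of Algorithm~\ref{alg:LSP}, finite termination follows. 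The main obstacle is step (a): one must check carefully that nonsingularity of $c$ delivers \emph{both} the constraint qualification and the exactness of the multiplier expressions $\lmd_i(x)$, so that \reff{prob:LSP KKT} is genuinely equivalent to \reff{prob:LSP} rather than a relaxation or a restriction. Once this equivalence and the identification of \reff{LSP:mom:kth} as its moment relaxation are established, the convergence and flat-truncation statements are a direct appeal to \cite[Theorem~4.4]{Nie2018saddle}.
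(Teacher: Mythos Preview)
Your proposal is correct and matches the paper's approach exactly: the paper simply states that the result ``follows from \cite[Theorem~4.4]{Nie2018saddle}'' with no further argument, and your steps~(a) and~(b) just spell out why that citation is legitimate. In particular, your LICQ derivation from the full column rank of $C(x)$ and the resulting equivalence of \reff{prob:LSP} and \reff{prob:LSP KKT} are the implicit ingredients the paper takes for granted.
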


\begin{example} \label{ex: LSP}
Consider the objectives
\[
\baray{l}
f_1 = \sum\limits_{i=1}^5x_i^4+x_1^2x_2+x_1x_2^2-3x_1x_2x_3+x_3x_4x_5+x_3^3, \\
f_2 = \sum\limits_{i=1}^5x_i^2 -x_1x_2^2-x_2x_3^2+x_3x_4^2+x_4x_5^2
\earay
\]
and the constraint $x_1^2 +\cdots+x_5^2 \ge 1$.
The feasible set is unbounded.
A list of some weights and the corresponding Pareto points
are given in Table~\ref{tab:PP:exm5.3}.\footnote{
Throughout the paper, all computational results are displayed with four decimal digits.
}
\begin{table}[htb]
\begin{center}
\caption{Some Pareto points for Example~\ref{ex: LSP}.}
\label{tab:PP:exm5.3}
\begin{tabular}{|c|c|} \hline
weight $w$ & Pareto point
\\ \hline
$(0.5,0.5)$ & $(-0.3371,    0.4659,  -0.7504,   -0.2807,   -0.1655)$ \\
$(0.25,0.75)$ & $(-0.0986,    0.3316,   -0.6802,   -0.5493,   -0.3405)$ \\
$(0.75,0.25)$ & $(-0.7711,   0.9015,   -1.1818,   -0.5752,   -0.5114)$ \\ \hline
\end{tabular}
\end{center}
\end{table}
\end{example}

It is worthy to note that
\[
\ideal[c_{eq}] \subseteq \ideal[\Phi], \quad
\qmod[c_{in}] \subseteq \qmod[\Psi].
\]
Hence, if $\ideal[c_{in}] + \qmod[c_{in}]$
is archimedean, then the condition (i) in Theorem~\ref{thmcvg:momLSP}
holds. Therefore, if the archimdeanness is met for the constraints
in \reff{prob:mop}, then the condition (i) must hold.

It is possible that $f_w(x)$ is unbounded below on $K$ for some weight $w$.
For instance, $f_w(x)$ is unbounded below for $w=(0,1)$
in Example~\ref{ex: LSP}. We refer to Section~\ref{sc:unbounded}
for how to detect unboundedness. Moreover, we remark that
not every Pareto point is the minimizer of a LSP,
as shown in the following.

\begin{example} \label{ex: CSP}
Consider the MOP with
\[
f_1 \,=\, -x_1^3-x_2^3+(x_3-x_4)^2, \quad
f_2 \,=\, x_1^2-x_2^2+(x_3+x_4)^2
\]
and the constraints $0\le x_1,x_2\le 1$. The LSP is
\[
\left\{ \baray{rl}
\min & w_1 f_1(x) + w_2 f_2(x) \\
  \st &  0\le x_1,x_2\le 1.
\earay \right.
\]
For $w_1\ge w_2$, the minimizer is $(1,1,0,0)$.
For $w_1 < w_2$, the minimizer is $(0,1,0,0)$.
So the LSP can only give two Pareto points,
by exploring all possibilities of weights.
However, each $(x_1,1,0,0)$, with $0\le x_1 \le 1$, is a Pareto point.
\end{example}

\subsection{Existence and choices of proper weights}
\label{ssc:weights}

When $K$ is compact, the LSP \reff{prob:LSP}
is bounded below for all weights. When $K$ is unbounded,
\reff{prob:LSP} may be unbounded below for some $w$ and has no minimizers.
To find a (weakly) Pareto point, we look for a nonzero weight
$w \ge 0$ such that \reff{prob:LSP} is bounded below,
i.e., $w$ is a proper weight.
The set of all proper weights is denoted as
\beq \label{set:W:propW}
\mcal{W} \, \coloneqq  \, \big\{ 0 \ne w \in \re_+^m :
f_w(x) \, \mbox{is bounded below on} \, K \big\}.
\eeq
Clearly, the proper weight set $\mcal{W}$ is a convex cone.

Note that a nonzero weight $w \in \mcal{W}$ if and only if
there exists a scalar $\gamma \in \re$ such that
$f_w(x) - \gamma  \in  \mathscr{P}_d(K)$. So,
\beq \label{W=Pd(K)+R}
\mcal{W} \,= \, \big\{ 0 \ne w  \in \re_+^m :
 f_w(x) \in \mathscr{P}_d(K) + \re  \big\}.
\eeq
The cone $\mathscr{P}_d(K)$
can be approximated by the sum of the ideal $\ideal[c_{eq}]$
and the quadratic module $\qmod[c_{in}]$.
Thus, we have the following.

\begin{proposition}
It holds that
\beq \label{W>=IQ(K)+R}
\big\{ 0 \ne  w  \in \re_+^m :
f_w(x) \in \ideal[c_{eq}] + \qmod[c_{in}] + \re  \big\}
\, \subseteq \, \mcal{W} .
\eeq
\end{proposition}

When $\ideal[c_{eq}] + \qmod[c_{in}]$ is archimedean
($K$ is bounded for this case),
the containment in \reff{W>=IQ(K)+R} is an equality.
This is because if $f_w(x)$ is bounded below on $K$,
then $f_w(x) - \gamma \in \ideal[c_{eq}] + \qmod[c_{in}]$
for $\gamma$ small enough.
When $K$ is unbounded, the sum
$\ideal[c_{eq}] + \qmod[c_{in}]$ cannot be archimedean,
and the containment in \reff{W>=IQ(K)+R} is typically not an equality.
For instance, for $K = \re^3$,
$f_1 = x_1^2x_2^2(x_1^2+x_2^2)$, $f_2=x_3^6-3x_1^2x_2^2x_3^2$,
we have $(1,1) \in \mcal{W}$ but $f_{(1,1)} \not\in \Sig[x]+\re$.
For this case, $\ideal[c_{eq}] = \{0\}$, $\qmod[c_{in}]= \Sig[x]$,
and $f_{(1,1)}$ is the Motzkin polynomial that is
nonnegative but not SOS.

Among all proper weights $w \ge 0$ normalized as $e^Tw =1$,
the smallest possibility of
the minimum value of \reff{prob:LSP} is equal to the smallest one of
$f_1^*, \ldots, f_m^*$,
where $f_i^*$ is the minimum value of $f_i(x)$ on $K$.
Some of $f_i^*$ may be $-\infty$.
For the choice $w = e_i$, the minimum value of \reff{prob:LSP}
is $f_i^*$. Beyond them, people are also interested in
$w$ such that the minimum value of \reff{prob:LSP} is maximum.
We discuss how to find such $w$ in the following.

Assume $d$ is the maximum degree of $f_1,\ldots, f_m$.
For the minimum value of $f_w(x)$ on $K$ to be maximum,
we consider the optimization
\beq \label{maxgm:fw-gm>=0}
\left\{ \baray{rl}
\max & \gamma \\
\st & 1 - e^T w = 0, \,  w_1 \ge 0, \ldots, w_m \ge 0,  \\
    & \sum\limits_{i=1}^m w_i f_i - \gamma \, \in \, \mathscr{P}_d(K).
\earay \right.
\eeq
The dual cone of $\mathscr{P}_d(K)$ is $\cl{ \mathscr{R}_d(K) }$.
(When $K$ is compact, the moment cone $\mathscr{R}_d(K)$ is closed.)
The dual optimization of \reff{maxgm:fw-gm>=0} is
\beq \label{min:mu>=fiy:RdK}
\left\{ \baray{rl}
\min &  \mu  \\
\st & \mu - \langle f_i, y \rangle \ge 0 \,\, (i=1,\ldots, m), \\
    & y_0 = 1, \, y \in \cl{ \mathscr{R}_d(K) }.
\earay \right.
\eeq
The $k$th order SOS relaxation for \reff{maxgm:fw-gm>=0} is
\beq \label{maxgm:fw:IQ2k}
\left\{ \begin{array}{rl}
\max  & \gamma \\
\st & w_1+\cdots w_m = 1, \, w_1 \ge 0, \ldots, w_m \ge 0,  \\
   &  \sum\limits_{i=1}^m w_i f_i - \gamma \, \in \,
   \ideal[c_{eq}]_{2k} + \qmod[c_{in}]_{2k} .
\end{array} \right.
\eeq
The dual optimization of \reff{maxgm:fw:IQ2k}
is the $k$th order moment relaxation for \reff{min:mu>=fiy:RdK}:
\beq \label{min:mu>=fiy:momkth}
\left\{ \baray{rl}
\min &  \mu  \\
\st & \mu - \langle f_i, y \rangle \ge 0 \, (i = 1, \ldots, m),  \\
    &  L_{c_i}^{(k)}[y] = 0 \, (i \in \mcal{E}), \\
    &   L_{c_j}^{(k)}[y] \succeq 0 \, (j \in \mcal{I}), \\
    &  M_k[y]\succeq 0, \\
    & y_0 = 1, \, y \in \re^{ \N^n_{2k} }.
\earay \right.
\eeq
As $k$ increases, the above gives a hierarchy of Moment-SOS relaxations
for solving \reff{maxgm:fw-gm>=0}.
When the sum $\ideal[c_{eq}] + \qmod[c_{in}]$ is archimedean,
the convergence of the hierarchy was shown in \cite{Las08,linmomopt}.

\begin{example}
Consider the objectives
\[
\baray{l}
f_1 = {\left({x_{1}}^2+x_{2}+x_{3}\right)}^2+{\left({x_{2}}^2+x_{3}
   +x_{4}\right)}^2-3\,x_{1}x_{2}x_{3}x_{4},  \\
f_2=\sum\limits_{i=1}^4x_i^4-\left(x_{1}-x_{2}\right)\left(x_{2}-x_{3}\right)
    \left(x_{3}-x_{4}\right)\left(x_{4}-x_{1}\right),  \\
f_3 = 3\sum\limits_{i=1}^4x_i^3 +x_{1}^2\left({x_{2}}^2-{x_{3}}^2\right)
 +x_{2}^2\left({x_{3}}^2-{x_{4}}^2\right) +x_{3}^2\left({x_{4}}^2-{x_{1}}^2\right)
\earay
\]
and the constraints $x_1x_2 \ge 1, x_2x_3 \ge 1, x_3x_4 \ge 1, x_1\ge 0$.
Each $f_i$ is unbounded below on the feasible set $K$.
The optimization \reff{maxgm:fw-gm>=0} can be solved by the
Moment-SOS hierarchy of \reff{maxgm:fw:IQ2k}-\reff{min:mu>=fiy:momkth}.
The computed optimal weight $w^*$
and Pareto point $x^*$ are respectively
\[
w^* = (0.5769, 0.2229, 0.2003), \,
x^* = ( 1.0105, 0.9897, 1.0105, 0.9897).
\]
The maximum of the minimum value of $f_w(x)$ on $K$ is
$\gamma^* = 11.9435$.
\end{example}

\subsection{Nonexistence of proper weights}
\label{ssc:noprop:weight}

When the feasible set $K$ is unbounded,
there may not exist a weight $w \ge 0$ such that
$f_w(x)$ is bounded below on $K$.
We discuss how to detect nonexistence of proper weights.

Recall that $d$ is the maximum degree of $f_i$ and
$\widetilde{f_w} (\tx)  \coloneqq   x_0^{d} f_w( \frac{x}{x_0} )$.
When $K$ is closed at $\infty$, the optimization
\reff{maxgm:fw-gm>=0} is equivalent to
\begin{equation} \label{prob:LSP:wihmg}
\left\{ \begin{array}{rl}
\max  &  \gamma \\
\st  & w_1+\cdots w_m = 1, \, (w_1, \ldots, w_m) \ge 0, \\
     & \widetilde{f_w} - \gamma x_0^{d} \in
     \mathscr{P}_d( \widetilde{K} ) .
\end{array} \right.
\end{equation}
The dual optimization of \reff{prob:LSP:wihmg} is
\beq \label{minmu:ty:RdKhmg}
\left\{ \baray{rl}
\min &  \mu  \\
\st & \mu - \langle x_0^{d} f_i(x/x_0), \ty \rangle \ge 0 \, (i =1,\ldots, m), \\
    & \langle x_0^{d}, \ty \rangle = 1, \,
    \ty \in \mathscr{R}_d( \widetilde{K} ).
\earay \right.
\eeq
When \reff{minmu:ty:RdKhmg} is unbounded below,
the problem \reff{prob:LSP:wihmg} must be infeasible,
and hence there is no proper weight.
This is the case if \reff{minmu:ty:RdKhmg} has a decreasing ray $\Delta \ty$:
\beq \label{-1:Dty:RdKhmg}
\left\{ \baray{l}
 -1 \ge \langle x_0^{d} f_i(x/x_0),\Delta \ty \rangle \,\, (i =1,\ldots, m), \\
\langle x_0^{d} , \Delta \ty \rangle = 0, \,\,
\Delta \ty \in \mathscr{R}_d( \widetilde{K} ).
\earay \right.
\eeq
Let $f_i^{(d)}$ denote the homogeneous part of degree $d$ for $f_i$, i.e.,
\[
f_i^{(d)}  = x_0^{d} f_i(x/x_0)\big|_{x_0 = 0}.
\]
The equality $\langle x_0^{d}, \Delta \ty \rangle = 0$
implies that every representing measure for $\Delta \ty$
must be supported in the hyperplane $x_0 = 0$.
Therefore, \reff{-1:Dty:RdKhmg} can be reduced to
\beq \label{-1:Dy:RdK}
 -1 \ge \langle  f_i^{(d)},\Delta y \rangle \,\, (i =1,\ldots, m), \quad
\Delta y \in \mathscr{R}_d(K^\circ),
\eeq
where $K^\circ$ is the set as in \reff{set:K0:unbdPOP}.
%
%
We remark that if $\deg(f_i) < d$, then
$f_i^{(d)} = 0$ and hence $\langle f_i^{(d)}, \Delta y \rangle = 0$,
which implies that \reff{-1:Dty:RdKhmg} is infeasible.
Therefore, the decreasing ray $\Delta \ty$ as in \reff{-1:Dty:RdKhmg}
exist only if all $f_i$ have the same degree.
The following is the nonexistence theorem of proper weights.
Like before, the closeness of $K$ at infinity can be weakened.

\begin{theorem} \label{thm:noprop:w}
Assume \reff{-1:Dy:RdK} has a feasible point
$\Delta y = \lmd_1 [z_1]_d + \cdots + \lmd_r [z_r]_d$,
with $\lmd_1, \ldots, \lmd_r >0$ and $z_1,\ldots,z_r \in K^\circ$.
If each $(0,z_i)$ lies on $cl \big( \widetilde{K}\cap \{x_0>0\} \big)$,
then the LSP \reff{prob:LSP} is unbounded below for all nonzero
$w \ge 0$ and hence $\mcal{W} = \emptyset$.
\end{theorem}
\proof
For each $w \ge 0$ with $e^Tw=1$, it holds that
\[
\baray{c}
 -1 \ge \langle  \sum\limits_{i=1}^m w_i f_i^{(d)},\Delta y \rangle
 = \langle  \widetilde{f_w}, \Delta y \rangle ,  \quad
\Delta y \in \mathscr{R}_d(K^\circ) .
\earay
\]
Since $\Delta y = \lmd_1 [z_1]_d + \cdots + \lmd_r [z_r]_d$,
there exists at least one $i$ such that
\[
-1/r \, \ge \,   \langle  \widetilde{f_w}, \lmd_i   [z_i]_d \rangle.
\]
By Theorem~\ref{thm:unbounded:pop}(ii), $f_w(x)$ is unbounded below on $K$,
since $(0,z_i)$ lies in the closure of
$\widetilde{K}\cap \{x_0>0\}$ and $\lmd_i >0$.
A nonzero weight $w \ge 0$ is proper if and only if $w/(e^Tw)$ is proper.
Hence, no proper weights exist and $\mcal{W} = \emptyset$.
\endproof

The moment system \reff{-1:Dy:RdK} is in the form \reff{-1>=giz:several}.
Algorithm~\ref{alg:ubd} can be applied to
get a feasible point for \reff{-1:Dy:RdK}.
This can be done by solving a hierarchy of
moment relaxations like \reff{min<Ry>:mom}.
The convergence is shown in Theorem~\ref{thm:cvg:ubd}.

\begin{example}
Consider the objectives
\[
\baray{l}
f_1 = -\big( \sum\limits_{i=1}^5 x_i^3 \big)
      - x_2^4 + x_4^4-x_1x_2x_3-x_3x_4x_5, \\
f_2 =  \big(\sum\limits_{i=1}^5 x_i \big)^3
    - \sum\limits_{i=1}^4 x_i^4 + x_1x_2x_3x_4 + x_2x_3x_4x_5, \\
f_3 = x_1^4-x_2^4+x_3^4+x_4^4-x_1x_2x_3-x_3x_4x_5, \\
f_4 = -(x_1x_2)^2+ (x_2x_3)^2+ (x_3x_4)^2+ (x_4x_5)^2
\earay
\]
and the constraints $x_1^2\ge 1, \ldots, x_5^2 \ge 1$.
By Algorithm~\ref{alg:ubd}, we get that
$\Dt y = \lmd [u]_4$ is feasible for \reff{-1:Dy:RdK} with
\[
u = ( -0.7014,   -0.7049,    0.0533,   -0.0428,    0.0803),\quad \lmd = 4.1146.
\]
The set $\mathcal{C}$ as in \eqref{eq:direction} is empty.
By Lemma \ref{lemma: cond closure}, the point $(0,u)$ lies on the closure of $\widetilde{K}\cap\{x_0>0\}$.
Therefore, the LSP \reff{prob:LSP} is unbounded below
for all nonzero weights $w \ge 0$ by Theorem~\ref{thm:noprop:w}.
\end{example}

We remark that when no proper weights exist,
the system \reff{-1:Dy:RdK} is still possibly infeasible.
For instance, this is the case for
\[
K = \re^1, \quad  f_1 = x_1^3+x_1, \quad f_2 = -x_1^3.
\]
There is no nonzero $(w_1, w_2)\ge 0$ such that $f_w(x)$
is bounded below on $\re^1$. However, there is no $\Dt y$ such that
\[
-1 \ge \langle x_1^3, \Dt y \rangle, \quad
-1 \ge \langle -x_1^3, \Dt y \rangle, \quad
\Dt y \in \mathscr{R}_3( \{ x_1^2 = 1 \} ).
\]
Moreover, when no proper weights exist, Pareto points may still exist.
For instance, this is the case for
\[
\left\{ \baray{rl}
\min  &  (x_1, x_2) \\
\st   &  x_1 + x_2^3 \geq 0 .
\earay \right.
\]
For every $t$, $(t^3, -t)$ is a Pareto point,
but there is no nonzero $w=(w_1, w_2) \ge 0$ such that
$w_1x_1+w_2x_2$ is bounded below on $x_1+x_2^3 \ge 0$.
%
%

\section{The Chebyshev scalarization}
\label{sc:CSP}

The Chebyshev scalarization problem is
\beq  \label{prob:CSP}
 \begin{array}{rl}
 \min\limits_{x \in K}  & \max\limits_{ 1 \le i \le m } w_i(f_i(x)- f_i^*)
\end{array}
\eeq
for a nonzero weight $w  \coloneqq  (w_1, \ldots, w_m) \ge 0$.
In the above, each $ f_i^*$
is the minimum value of $f_i$ on $K$.
In this section, we assume all $f_i^* > -\infty$.
If one of them is $-\infty$, we refer to Subsection~\ref{ssc:exist:wpp}
for how to get PPs and WPPs.

Each minimizer of \reff{prob:CSP} is a weakly Pareto point.
Conversely, every weakly Pareto point is a minimizer of the CSP \reff{prob:CSP}
for some weight, provided each $f_i^* > -\infty$.
This is because if $x^*$ is a weakly Pareto point,
then there exist weights $w_i \ge 0$ such that
all $w_i(f_i(x^*)-f_i^*) $ are equal,
since $f_i(x^*)- f_i^* \ge 0$ for each $i$.
Then $x^*$ is the minimizer for that CSP.
Observe that $f_i^*$ equals the minimum value of the LSP \reff{prob:LSP}
for the weight $w = e_i$.
Algorithm~\ref{alg:LSP} can be applied to compute $f_i^*$.

After all $f_i^*$ are obtained,
one can solve the CSP \reff{prob:CSP} for a weakly Pareto point.
With the new variable $x_{n+1}$, the CSP~\reff{prob:CSP} is equivalent to
\beq \label{CSP:min:x0}
\left\{ \begin{array}{rl}
\min  &  x_{n+1} \\
\st   & x_{n+1} - w_i(f_i(x)- f_i^*) \ge 0 \, (i=1,\ldots,m), \\
      & c_i(x) = 0 \, (i \in \mcal{E}), \\
      & c_j(x) \ge 0 \, (j \in \mcal{I}) .
\end{array} \right.
\eeq
To get convergent Moment-SOS relaxations,
we typically need archimedeanness for constraining polynomials.
The feasible set of \reff{CSP:min:x0} is unbounded.
To fix this issue, one can select a feasible point $\xi \in K$ and let
\[
B_0 \,  \coloneqq  \, \max\limits_{1 \le i \le m} \big( w_i(f_i(\xi)- f_i^*) \big).
\]
Then \reff{CSP:min:x0} is equivalent to
\beq \label{CSP:min:x0:AC}
\left\{ \begin{array}{rl}
\min  &  x_{n+1} \\
\st   & x_{n+1} - w_i(f_i(x)- f_i^*) \ge 0\, (i=1,\ldots,m), \\
      & B_0 - x_{n+1} \ge 0,\,\, x_{n+1} \ge 0, \\
      & c_i(x) = 0 \, (i \in \mcal{E}), \\
      & c_j(x) \ge 0 \, (j \in \mcal{I}) .
\end{array} \right.
\eeq
For convenience, denote the set
\beq \label{set:mcG}
\mcal{G}  \coloneqq  \big\{ c_j \big\}_{ j \in \mcal{I} } \cup
\{x_{n+1},\, B_0 - x_{n+1}\} \cup
\big \{ x_{n+1} - w_i(f_i- f_i^*) \big \}_{i=1}^m .
\eeq
The $k$th order moment relaxation for \reff{CSP:min:x0:AC} is
\beq \label{CSP:momx0:kth}
\left\{ \begin{array}{rl}
\min &  \langle x_{n+1},y \rangle \\
\st   &  L_{c_i}^{(k)}[y] = 0 \, (i \in \mcal{E}), \\
      & L_{p}^{(k)}[y] \succeq 0 \, (p \in \mcal{G}), \\
     &   M_k[y] \succeq 0, \\
     & y_0 = 1, \,  y \in \re^{ \N^{n+1}_{2k} }.
\end{array} \right.
\eeq
Let $d_0$ be the degree
\beq \label{deg:d0:ubd}
d_0 \,  \coloneqq  \, \max \big\{ \lceil d/2 \rceil,
\lceil \deg(c_i)/2 \rceil \, (i \in \mcal{E} \cup \mcal{I})  \big\} .
\eeq
Suppose $y^*$ is a minimizer of \reff{CSP:momx0:kth}.
If there exists $t \in [d_0, k]$ such that
\beq \label{flat:csp}
\rank\, M_t[y^*] \, = \, \rank\, M_{t-d_0}[y^*] ,
\eeq
then we can get $\rank\, M_t[y^*]$ minimizers for \reff{prob:CSP}
(see \cite{HenLas05,nie2013certifying}).
The following is about the convergence of the hierarchy of
\reff{CSP:momx0:kth}.

\begin{theorem} \label{cvg:CSP:momsos}
Assume $\ideal[c_{eq}]+\qmod[c_{in}]$ is archimedean.
Suppose $y^{(k)}$ is a minimizer of the moment relaxation \reff{CSP:momx0:kth}
for the order $k$. If the CSP \reff{prob:CSP}
has finitely many minimizers, then for $t$ big enough,
every accumulation point of $\{y^{(k)}|_{2t} \}_{k=d_0}^\infty$
must satisfy \reff{flat:csp}.
\end{theorem}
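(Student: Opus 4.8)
The plan is to regard the augmented problem \reff{CSP:min:x0:AC} as an ordinary polynomial optimization problem in the variables $(x,x_{n+1})$ and then apply the finite-convergence theory for the Moment-SOS hierarchy. First I would check that the quadratic module generated by the constraints of \reff{CSP:min:x0:AC} is archimedean. Since $\ideal[c_{eq}]+\qmod[c_{in}]$ is archimedean, there is a constant $N>0$ with $N-\|x\|^2 \in \ideal[c_{eq}]+\qmod[c_{in}]$. The generators $x_{n+1}\ge 0$ and $B_0-x_{n+1}\ge 0$ appearing in $\mc{G}$ as in \reff{set:mcG} give $B_0^2-x_{n+1}^2 = (B_0-x_{n+1})^2 + 2x_{n+1}(B_0-x_{n+1}) \in \qmod[\mc{G}]$, so that $N+B_0^2-\|x\|^2-x_{n+1}^2 \in \ideal[c_{eq}]+\qmod[\mc{G}]$ and the module is archimedean in $\re[x,x_{n+1}]$; in particular the feasible set of \reff{CSP:min:x0:AC} is compact and its minimum value $v^*$ is attained.

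With archimedeanness available, the standard Lasserre theory applies: the optimal values of the relaxations \reff{CSP:momx0:kth} increase monotonically to $v^*$, which equals the optimal value of \reff{prob:CSP}. The same archimedean bound, via the localizing-matrix argument used in the proof of Theorem~\ref{thm:cvg:ubd}(ii), shows that all diagonal moments $(y^{(k)})_{2\bt}$ are bounded by a constant independent of $k$. Hence, for each fixed $t$, the truncated sequence $\{y^{(k)}|_{2t}\}_{k=d_0}^\infty$ is bounded and possesses accumulation points.

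Next I would identify these accumulation points. Let $y^{**}$ be an accumulation point of $\{y^{(k)}|_{2t}\}_k$. By the nesting of the truncated feasible sets together with the archimedean truncated moment characterization (Prop.~3.3 of \cite{linmomopt}), $y^{**}$ admits a representing probability measure $\mu$ supported in the feasible set of \reff{CSP:min:x0:AC}. Because $y_0=1$ and the objective values converge, $\int x_{n+1}\,\mathtt{d}\mu = \langle x_{n+1}, y^{**}\rangle = v^*$, while $x_{n+1}\ge v^*$ holds throughout the feasible set; hence $\mu$ is concentrated on the minimizer set of \reff{CSP:min:x0:AC}. Every minimizer of \reff{CSP:min:x0:AC} has the form $(x^*,v^*)$ with $x^*$ a minimizer of \reff{prob:CSP}, so the hypothesis that \reff{prob:CSP} has finitely many minimizers forces $\mu$ to be finitely atomic, supported in a fixed finite set $S$.

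It remains to invoke flat truncation. Since $\mu$ is finitely atomic and supported in the fixed finite set $S$, interpolation at these distinct points yields a threshold $t_0$, depending only on $S$, such that $\rank\, M_t[y^{**}] = |\supp{\mu}|$ for all $t\ge t_0$; the same equality holds for $M_{t-d_0}[y^{**}]$ whenever $t-d_0\ge t_0$. As $S$ has only finitely many subsets, $t_0$ can be taken uniformly over all accumulation points, and \reff{flat:csp} therefore holds for every $t$ large enough. This is exactly the finite-convergence-via-flat-truncation conclusion of \cite{nie2013certifying}, whose hypotheses, archimedeanness and a finite minimizer set, we have verified. I expect the main obstacle to be the third step, namely certifying that each accumulation point carries a representing measure concentrated on the minimizer set; once this is in place, the flat-truncation conclusion is a direct consequence of the finiteness of $S$.
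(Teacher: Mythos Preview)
Your overall strategy matches the paper's exactly: verify that $\ideal[c_{eq}]+\qmod[\mc{G}]$ is archimedean in $\re[x,x_{n+1}]$, and then invoke Theorem~3.3 of \cite{nie2013certifying}. The paper stops immediately after the citation, whereas you sketch the content of that theorem (boundedness of moments, measure representation of accumulation points, concentration on the minimizer set, flat truncation from finite atomicity). That extra detail is fine and essentially reproduces the cited argument.

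There is, however, a technical slip in your archimedeanness check. You write
\[
B_0^2-x_{n+1}^2 \;=\; (B_0-x_{n+1})^2 + 2x_{n+1}(B_0-x_{n+1}) \;\in\; \qmod[\mc{G}],
\]
but the term $2x_{n+1}(B_0-x_{n+1})$ is a \emph{product of two generators}, not a generator times an SOS polynomial. Such products lie in the preordering, but not automatically in the quadratic module $\qmod[\mc{G}]=\Sig[x,x_{n+1}]+\sum_{g\in\mc{G}} g\cdot\Sig[x,x_{n+1}]$. Since the relaxation \reff{CSP:momx0:kth} uses localizing matrices $L_p^{(k)}[y]$ for $p\in\mc{G}$ individually (not for products), it is the quadratic module that matters here. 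The paper fixes this with the identity
\[
B_0^2 - x_{n+1}^2 \;=\; (B_0 - x_{n+1})^2
+ x_{n+1}\cdot \tfrac{2}{B_0}(B_0-x_{n+1})^2
+ (B_0-x_{n+1})\cdot \tfrac{2}{B_0} x_{n+1}^2,
\]
in which each summand is visibly an SOS or a single generator times an SOS, so the membership in $\qmod[\mc{G}]$ is legitimate. With this correction your argument goes through.
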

\proof
Since $\ideal[c_{eq}]+\qmod[c_{in}]$ is archimedean,
there exists a scalar $N$ such that
$
N - x^Tx \, \in \, \ideal[c_{eq}]+\qmod[c_{in}].
$
Note that
\[
B_0^2 - x_{n+1}^2 = (B_0 - x_{n+1})^2 + 2x_{n+1}\cdot \frac{(B_0-x_{n+1})^2}{B_0}
+ 2(B_0-x_{n+1}) \frac{x_{n+1}^2}{B_0} .
\]
Therefore, we get that
\[
N - x^Tx + B_0^2 - x_{n+1}^2 \, \in \, \ideal[c_{eq}]+\qmod[\mcal{G}].
\]
This means that $\ideal[c_{eq}]+\qmod[\mcal{G}]$ is archimedean.
When the CSP \reff{prob:CSP} has finitely many minimizers,
the conclusion is implied by Theorem~3.3 of \cite{nie2013certifying}.
\endproof

When $\ideal[c_{eq}]+\qmod[c_{in}]$ is not archimedean
(this is the case if $K$ is unbounded),
the homogenization method in Subsection~\ref{ssc:weights}
can be similarly applied. Moreover, the method in \cite{MLM19}
can also be applied to solve \reff{prob:CSP}.

\begin{example} \label{exm6.2}
Consider the objectives
\[
\baray{l}
f_1 = \sum\limits_{i=1}^4 x_i^2 -(x_1x_2+x_3x_4)(x_1x_3+x_2x_4), \\
f_2 = \sum\limits_{i=1}^4x_i^4+x_1x_2x_3+x_2x_3x_4 + x_1x_2x_3x_4, \\
f_3 =  \sum\limits_{i=1}^4x_i^6 + (x_1^2-x_2^2+1)(x_2^2-x_3^2+1)(x_3^2-x_4^2+1)
\earay
\]
and the constraint $x_1x_2 \le 1,x_2x_3 \le 1,x_3x_4 \le 1,x_1x_4 \le 1$.
The minimum values $f_1^*, f_2^*, f_3^*$
are $0.0000,-0.0710,0.6029$ respectively.
A list of some weights and corresponding weakly Pareto points
are in Table~\ref{tab:exm6.2}.
Indeed, they are all Pareto points,
confirmed by solving the optimization~\reff{detect:Pareto:u}.
\begin{table}[htb]
\begin{center}
\caption{Some Pareto points for Example~\ref{exm6.2}.}
\label{tab:exm6.2}
\begin{tabular}{|c|c|} \hline
weight $w$ & Pareto point \\ \hline
$(1,1,1)$ & $(0.000,0.000,0.000,0.4503)$ \\
$(1,2,2)$ & $(-0.0024,   -0.0979,   -0.0635,   -0.5248)$ \\
$(1,2,3)$ & $(-0.0029,   -0.1228,   -0.0700,   -0.5648)$ \\ \hline
\end{tabular}
\end{center}
\end{table}
\end{example}

\section{Existence and nonexistence of PPs and WPPs}
\label{sc:detect}

This section discusses how to check if a given point
is a (weakly) Pareto point and
how to detect existence or nonexistence of (weakly) Pareto points.

\subsection{Detection of PPs and WPPs}
\label{ssc:detec:PP}

For a given point $x^* \in K$, how can we detect if it is a Pareto point or not?
To this end, consider the optimization
\beq \label{detect:Pareto:u}
\left\{ \begin{array}{rl}
\min  & f_e(x) \coloneqq  f_1(x)+\cdots+f_m(x) \\
\st   & f_i(x^*) - f_i(x) \ge 0 \, \, (i=1,\ldots,m), \\
      & x \in K.
\end{array} \right.
\eeq
This is a kind of lexicographic method (see \cite{marler2004survey}).
Let $z^*$ be a minimizer of \reff{detect:Pareto:u}, if it exists.
Then, $x^*$ is a Pareto point if and only if
the minimum value of \reff{detect:Pareto:u} is equal to $f_e(x^*)$.
Moreover, if $x^*$ is not a Pareto point,
the minimizer $z^*$ must be a Pareto point,
since all the weights are positive.
A Pareto point may be obtained by solving \reff{detect:Pareto:u}
for given $x^* \in K$, provided \reff{detect:Pareto:u} has a minimizer.

Let $F$ be the feasible set of \reff{detect:Pareto:u} and
\beq \label{polytuple:F}
\mcal{F} \, \coloneqq \, \big\{ c_j \big\}_{ j \in \mcal{I} } \cup
 \big \{ f_i(x^*) - f_i(x)) \big \}_{i=1}^m .
\eeq
For a degree $k \ge d/2$, the $k$th order moment relaxation
for \reff{detect:Pareto:u} is
\beq \label{detect:wP:momkth}
\left\{ \begin{array}{rl}
\min &  \langle f_e,y \rangle \\
\st  &  L_{c_i}^{(k)}[y] = 0 \, (i \in \mcal{E}), \\
     &  L_{q}^{(k)}[y] \succeq 0 \, (q \in \mcal{F}), \\
     &   M_k[y] \succeq 0, \\
     & y_0 = 1, \,  y \in \re^{ \N^{n}_{2k} }.
\end{array} \right.
\eeq
Recall that $d_0$ is the degree as in \reff{deg:d0:ubd}.
Suppose $y^*$ is a minimizer of \reff{detect:wP:momkth}.
If there exists $t \in [d_0, k]$ such that
\beq \label{flat:detcP}
\rank\, M_t[y^*] \, = \, \rank\, M_{t-d_0}[y^*] ,
\eeq
then we can get $r \coloneqq  \rank\, M_t[y^*]$ minimizers for \reff{detect:Pareto:u}.
Recall that $c_{eq}$ is the tuple of equality constraining polynomials.
The following result follows from Theorem~3.3 of \cite{nie2013certifying}.

\begin{theorem}
\label{cvg:detct:PP}
Assume $\ideal[c_{eq}]+\qmod[\mcal{F}]$ is archimedean.
Suppose $y^{(k)}$ is a minimizer of the relaxation \reff{detect:wP:momkth}
for the order $k$. If \reff{detect:Pareto:u}
has only finitely many minimizers, then for $t$ big enough,
every accumulation point of $\{y^{(k)}|_{2t} \}_{k=1}^\infty$
must satisfy \reff{flat:detcP}.
\end{theorem}

When $\ideal[c_{eq}]+\qmod[\mcal{F}]$ is not archimedean,
the hierarchy of relaxations \reff{detect:wP:momkth}
may not converge. For such a case, we refer to
the homogenization method in Subsection~\ref{ssc:weights}
or the method in \cite{MLM19}.

\begin{example}
(i) Consider the objectives
\[
\baray{l}
f_1 = x_1^2(x_1-2)^2 + (x_1-x_2)^2+(x_2-x_3)^2+(x_3-x_4)^2, \\
f_2 = -x_1^2-x_2^2-x_3^2-x_4^2+x_1x_2+x_2x_3+x_3x_4
\earay
\]
and the constraint $x \ge 0$.
We first solve the CSP~\reff{prob:CSP}
with $w_1=w_2=1$ and 
get the weakly Pareto point $x^*=(0,0,0,0)$.
It is not a Pareto point. By solving \reff{detect:Pareto:u},
we get the Pareto point $(2.000,2.001,2.001,2.001)$. \\
(ii) Consider the objectives
\[
f_1 = x_1^3-x_1^2x_2-x_2, \quad  f_2 = x_2^3-x_1x_2^2-x_1
\]
and the constraint $x_1x_2\le 1$.
The LSP~\reff{prob:LSP} is unbounded below for all weights $w_i$,
which is confirmed by a feasible point for \reff{-1:Dy:RdK}.
But we are still able to find a Pareto point by solving
\reff{detect:Pareto:u} for some given $x^*$.
For instance, for $x^*=(-1,-0.5)$, solving~\reff{detect:Pareto:u}
gives the Pareto point $(1.0000,1.0000)$.
\end{example}

%
%
%

We can similarly detect if a given point $x^* \in K$
is a weakly Pareto point or not. Consider the optimization
\beq \label{detect:Pareto:weak}
\left\{ \begin{array}{rl}
\min  & \max\limits_{1 \le i \le m} \big( f_i(x) - f_i(x^*) \big) \\
\st   & f_i(x^*) - f_i(x) \ge 0 \, (i=1,\ldots, m), \\
      & c_i(x) = 0 \, (i \in \mcal{E}), \\
      &  c_j(x) \ge 0 \, (j \in \mcal{I}) .
\end{array} \right.
\eeq
Let $z^*$ be a minimizer of \reff{detect:Pareto:weak}, if it exists.
Then, $x^*$ is a weakly Pareto point if and only if
the optimal value of \reff{detect:Pareto:weak} is equal to $0$.
Moreover, if $x^*$ is not a weakly Pareto point,
then one can show that $z^*$ is a weakly Pareto point.
By introducing the new variable $x_{n+1}$, the optimization
\reff{detect:Pareto:weak} is equivalent to
\beq  \label{detect:Pareto:weakx0}
\left\{ \begin{array}{rl}
\min  & x_{n+1}  \\
\st   & x_{n+1} - f_i(x) + f_i(x^*) \ge 0\, (i=1,\ldots, m),  \\
      & f_i(x^*) - f_i(x) \ge 0 \, (i=1,\ldots,m), \\
      & c_i(x) = 0 \, (i \in \mcal{E}), \\
      & c_j(x) \ge 0 \, (j \in \mcal{I}) .
\end{array} \right.
\eeq
The optimal value of \reff{detect:Pareto:weakx0}
is always less than or equal to $0$.
A similar hierarchy of moment relaxations like \reff{CSP:momx0:kth}
can be applied to solve \reff{detect:Pareto:weakx0},
and a similar convergence result like Theorem~\ref{cvg:CSP:momsos} holds.
When the feasible set of \reff{detect:Pareto:weak} is unbounded,
the Moment-SOS hierarchy may not converge.
For such a case, we refer to
the homogenization method in Subsection~\ref{ssc:weights}
or the method in \cite{MLM19}.

\subsection{Existence of PPs and WPPs}
\label{ssc:exist:wpp}

When $K$ is unbounded, we discuss how to detect existence of PPs and WPPs.
Consider the min-max optimization
\beq \label{min:max(f):K}
\min\limits_{x \in K} \,   \max\limits_{ 1 \le i \le m } f_i(x)  .
\eeq
The following is the existence result.
See Subsection~\ref{ssc:support} for $\pi$-minimal points.

\begin{theorem} \label{thm: exist of Pareto}
The min-max optimization \reff{min:max(f):K} has the following properties:

\bit

\item [(i)]
If \reff{min:max(f):K} is unbounded below,
then there is no weakly Pareto point, and hence there is no Pareto point.
If \reff{min:max(f):K} is bounded below,
then every minimizer of \reff{min:max(f):K}
(if it exists) is a weakly Pareto point.

\item [(ii)]
Let $S$ be the set of minimizers of \reff{min:max(f):K}.
For each $x^* \in S$, if $f(x^*)$ is a $\pi$-minimal point of the image $f(S)$
for a permutation $\pi$ of $(1,\ldots,m)$,
then $x^*$ is a Pareto point. In particular, if $S$ is compact,
then there exists a Pareto point.

\eit
\end{theorem}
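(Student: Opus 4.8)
The plan is to treat the three assertions of Theorem~\ref{thm: exist of Pareto} separately, arguing in each case by contradiction against the relevant optimality definition and exploiting the elementary interaction between the pointwise order on $f(K)$ and the scalar function $x \mapsto \max_i f_i(x)$. Throughout I would let $\nu$ denote the optimal value of \reff{min:max(f):K}.

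For the first half of (i), I would suppose a weakly Pareto point $x^*$ exists while \reff{min:max(f):K} is unbounded below, so that there is a sequence $\{u_k\} \subseteq K$ with $\max_i f_i(u_k) \to -\infty$. Since each $f_i(x^*)$ is a fixed finite number, for $k$ large enough $\max_j f_j(u_k) < \min_i f_i(x^*)$, whence $f_i(u_k) \le \max_j f_j(u_k) < f_i(x^*)$ for every $i$; this contradicts the definition of a weakly Pareto point, and as every Pareto point is weakly Pareto, nonexistence of the latter also rules out the former. For the second half, if $x^*$ minimizes \reff{min:max(f):K} but is not weakly Pareto, there is $x \in K$ with $f_i(x) < f_i(x^*)$ for all $i$; picking $i_0$ attaining $\max_i f_i(x)$ gives $\max_i f_i(x) = f_{i_0}(x) < f_{i_0}(x^*) \le \max_i f_i(x^*) = \nu$, contradicting minimality.

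For (ii), the key preliminary observation is that any $x \in K$ dominating a minimizer must itself be a minimizer: if $x^* \in S$ and $f_i(x) \le f_i(x^*)$ for all $i$, then $\max_i f_i(x) \le \max_i f_i(x^*) = \nu$, and optimality of $\nu$ forces $\max_i f_i(x) = \nu$, i.e. $x \in S$ and $f(x) \in f(S)$. Supposing now, for contradiction, that $x^*$ is not a Pareto point, some such $x$ additionally satisfies $f(x) \ne f(x^*)$. I would then show by induction along the defining chain $f(S) = T_0 \supseteq \cdots \supseteq T_m$ of $\pi$-minimal points that $f(x)$ lands in every $T_k$ containing $f(x^*)$: at step $k$, since $f(x^*) \in T_k$ its $\pi(k)$th coordinate is the minimum of that coordinate over $T_{k-1}$, and $f_{\pi(k)}(x) \le f_{\pi(k)}(x^*)$ together with $f(x) \in T_{k-1}$ forces equality, so $f(x) \in T_k$. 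Hence $f(x) \in T_m$; but $T_m$ is a singleton, so $f(x) = f(x^*)$, contradicting $f(x) \ne f(x^*)$. Finally, when $S$ is nonempty and compact, so is its continuous image $f(S)$; then $T_m \ne \emptyset$ (a single point) for any chosen permutation $\pi$, and the minimizer $x^*$ with $f(x^*) \in T_m$ is a Pareto point by what was just shown, yielding existence.

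The two contradiction arguments in (i) and the ``domination preserves membership in $S$'' step are routine; the one place that needs genuine care is the inductive verification that a dominating objective vector descends through the entire nesting chain $T_0 \supseteq \cdots \supseteq T_m$ and therefore coincides with the unique $\pi$-minimal point. I would also be explicit that ``$S$ compact'' is understood to include $S \ne \emptyset$, since otherwise no minimizer, and hence no guaranteed Pareto point, exists.
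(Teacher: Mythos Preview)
Your proposal is correct and follows essentially the same line as the paper's proof: the two halves of (i) are argued by the same contradiction against the scalar $\max_i f_i$, and for (ii) both you and the paper first observe that any $x\in K$ with $f(x)\le f(x^*)$ must lie in $S$, then use $\pi$-minimality of $f(x^*)$ in $f(S)$ to force $f(x)=f(x^*)$. Your inductive descent through the chain $T_0\supseteq\cdots\supseteq T_m$ is in fact more explicit than the paper's one-line appeal to $\pi$-minimality, and your remark that compactness of $S$ should include $S\ne\emptyset$ is a sensible clarification the paper leaves implicit.
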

\proof
(i) If \reff{min:max(f):K} is unbounded below,
then for every $x\in  K$, there exists $z \in K$ such that
\[
\max_{ 1 \le i \le m } f_i(z) \, < \, \min_{ 1 \le i \le m } f_i(x).
\]
This implies $f(z) < f(x)$, hence there is no weakly Pareto point.

Suppose \reff{min:max(f):K} is bounded below and it has a minimizer,
say, $x^*$. Then $x^*$ must be a weakly Pareto pint.
If otherwise there is $z \in  K$ such that $f(z) < f(x^*)$, then
\[
 \max_i f_i(z) \,<\, \max_i f_i(x^*),
\]
which contradicts that $x^* $ is a minimizer.

(ii) Suppose $f(x^*)$ is a $\pi$-minimal point of $f(S)$.
Let $z \in  K$ be a point such that $f(z) \le f(x^*)$.
Since $x^*$ is a minimizer of \reff{min:max(f):K}, one can see that
\[
\max_{1 \le i \le m } f_i(x^*) \, \le \,
\max_{1 \le i \le m } f_i(z) \, \le \,
\max_{1 \le i \le m } f_i(x^*).
\]
This implies that $z$ is also a minimizer of \reff{min:max(f):K},
so $z \in S$. Since $f(x^*)$ is $\pi$-minimal among $f(S)$,
$f(x^*) \le f(z)$, so $f(x^*) = f(z)$
and hence $x^*$ is a Pareto point.
When $S$ is compact, the set $S$ must have
a $\pi$-minimal point, for every permutation $\pi$ of $(1,\ldots,m)$,
and hence \reff{prob:mop} has a Pareto point,
by Proposition~\ref{pro:char:PV}.
 \endproof

Each optimizer $x^*$ of \reff{min:max(f):K} is a weakly Pareto point.
One can solve \reff{detect:Pareto:u} to check if
$x^*$ is a Pareto point or not.
If it is not, each minimizer of \reff{detect:Pareto:u} is a Pareto point.
We remark that \reff{min:max(f):K} can be reformulated as polynomial optimization.
By introducing the new variable $x_{n+1}$,
the optimization~\reff{min:max(f):K} is equivalent to
\beq \label{find:wP:B2>fi}
\left\{ \begin{array}{rl}
\min   & x_{n+1} \\
\st    & x_{n+1} \ge f_i(x) \, (i \in  [m]),  \\
       & x \in K.
\end{array} \right.
\eeq
The Moment-SOS hierarchy can be applied to solve it.
When the set $K$ is unbounded,
the feasible set of \reff{find:wP:B2>fi} is also unbounded.
The Moment-SOS hierarchy may not converge.
For such a case, we refer to
the homogenization method in Subsection~\ref{ssc:weights}
or the method in \cite{MLM19}.

Once a minimizer $x^*$ for \reff{find:wP:B2>fi} is obtained,
we can solve \reff{detect:Pareto:u} to detect
if it is a Pareto point or not.
If it is not, we may get a Pareto point by solving \reff{detect:Pareto:u}.

\begin{example}
Consider the MOP with objectives
\[
\baray{ll}
f_1 = x_1^3+x_2^3-x_3^3+x_3^2x_4^2, \, & f_2 = x_2^3+x_3^3-x_4^3+x_4^2x_1^2, \\
f_3 = x_3^3+x_4^3-x_1^3+x_1^2x_2^2, \, & f_4 = x_4^3+x_1^3-x_2^3+x_2^2x_3^2,
\earay
\]
and with the exterior constraint $x_1^3+x_2^3+x_3^3+x_4^3 \ge 1$.
All $f_1,f_2,f_3,f_4$ are unbounded below on $K$.
The CSP~\reff{prob:CSP} does not exist since each $f_i^* = -\infty$.
However, solving \reff{find:wP:B2>fi}
gives the Pareto point $(0.6300,0.6300,0.6300,0.6300).$
\end{example}

\subsection{Nonexistence of WPPs}
\label{ssc:noWPP}

We discuss how to detect nonexistence of weakly Pareto points,
when $K$ is unbounded. Recall that $d_i  \coloneqq  \deg(f_i)$.
Observe that \reff{min:max(f):K} is unbounded below if and only if
the following optimization is unbounded below:
\beq \label{minmax:x0>=f:K}
\left\{ \begin{array} {rl}
\min  & x_{n+1}  \\
\st  &  -(-x_{n+1})^{d_i} - f_i(x) \ge 0 \, (i \in  [m]),  \,
       x \in K.
\end{array} \right.
\eeq
Let $K_1$ be the feasible set of \reff{minmax:x0>=f:K}
and let its homogenization be (note $\tilde{x} \coloneqq (x_0, x)$):
\beq  \label{set:K2hmg}
 \widetilde{K}_1 \,  \coloneqq
 \left\{ (x_0, x, x_{n+1}) \left|
 \begin{array} {c}
  -(-x_{n+1})^{d_i} - \widetilde{f_i}(\tilde{x})  \ge 0 \, (i \in [m]),   \\
  \widetilde{c_i}(\tilde{x}) = 0 \, (i \in \mcal{E}),  \\
  \widetilde{c_j}(\tilde{x}) \ge 0 \, (j \in \mcal{I}),  \\
 \| \tilde{x} \|^2 + \| x_{n+1}  \|^2 = 1, \, x_0 \ge 0
 \end{array} \right. \right\} .
\eeq
When $K_1$ is closed at $\infty$, $x_{n+1} \ge \gamma$ on $K_1$
if and only if $x_{n+1} - \gamma  x_0 \ge 0$ on $\widetilde{K}_1$, i.e.,
$x_{n+1} - \gamma  x_0 \in \mathscr{P}_1( \widetilde{K}_1)$.
So, we consider the linear conic optimization
\beq \label{mxgm:PdtK1}
\max \quad \gamma \quad
\st  \quad  x_{n+1} - \gamma  x_0  \in \mathscr{P}_1( \widetilde{K}_1) .
\eeq
The optimization \reff{min:max(f):K} is unbounded below if and only if
\reff{mxgm:PdtK1} is infeasible, when $K_1$ is closed at $\infty$.
The dual optimization of \reff{mxgm:PdtK1} is
\beq \label{min<x0,checky>}
\min \quad \langle x_{n+1}, \check{y} \rangle  \quad
\st  \quad  \langle x_0, \check{y} \rangle = 1, \,
       \check{y} \in \mathscr{R}_1( \widetilde{K}_1 ).
\eeq
Note that \reff{min<x0,checky>} is feasible if $K$ is nonempty.
So, it is unbounded below if there is a decreasing ray $\Delta \check{y}$:
\beq \label{maxx0:x0-gm:inPdtK2}
 \begin{array} {c}
 \langle x_{n+1},  \Delta \check{y} \rangle  = -1, \quad
 \langle x_0, \Delta \check{y} \rangle = 0, \quad
 \Delta \check{y} \in \mathscr{R}_1( \widetilde{K}_1 ) .
\end{array}
\eeq
Since $x_0 \ge 0$ on $\widetilde{K}_1$,
the equality $\langle x_0, \Delta \check{y} \rangle = 0$
implies that every representing measure for $\Delta \check{y}$
is supported in $x_0 = 0$.
Therefore, \reff{maxx0:x0-gm:inPdtK2} is equivalent to
\beq \label{<x0,Dty>=-1:ray}
\begin{array} {c}
 \langle x_{n+1},  \Delta \ty \rangle  = -1, \quad
    \Delta \ty \in \mathscr{R}_1( K_1^\circ ) ,
\end{array}
\eeq
where $K_1^\circ$ is the linear section $x_0 = 0$ of $\widetilde{K}_1$:
\beq  \label{set:K2o}
K_1^\circ \,  \coloneqq  \,
\left\{ (x, x_{n+1}) \left|
\begin{array} {c}
    -(-x_{n+1})^{d_i} - f_i^{hom}(x) \ge 0 \, (i \in [m]),   \\
   c_i^{hom}(x) = 0 \, (i \in \mcal{E}),  \\
   c_j^{hom}(x)\ge 0 \, (j \in \mcal{I}),  \\
   \| x \|^2  + x_{n+1}^2 = 1
\end{array} \right. \right\} .
\eeq
The following is the theorem for nonexistence of WPPs.

\begin{theorem}\label{thm:noWPP}
Suppose $\Delta \ty = \lmd v$, with $\lmd >0$ and $v \in K_1^\circ$,
is a feasible point for \reff{<x0,Dty>=-1:ray}.
If the point $(0,v) \in \cl{ \widetilde{K}_1 \cap \{ x_0 > 0 \} }$,
then \reff{minmax:x0>=f:K} and \reff{min:max(f):K}
must be unbounded below, and hence there are no weakly Pareto points.
\end{theorem}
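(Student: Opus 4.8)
The plan is to reduce the statement to an application of Theorem~\ref{thm:unbounded:pop}(ii), since the hypotheses of Theorem~\ref{thm:noWPP} are essentially set up to match that earlier unboundedness certificate. The key observation is that the feasible point $\Delta\ty = \lmd v$ for \reff{<x0,Dty>=-1:ray}, together with the closure condition $(0,v)\in \cl{\widetilde{K}_1 \cap \{x_0>0\}}$, is exactly the data needed to conclude that the objective $x_{n+1}$ is unbounded below on $K_1$. I treat \reff{minmax:x0>=f:K} as a polynomial optimization problem in the extended variable $(x,x_{n+1})\in\re^{n+1}$ with objective $g(x,x_{n+1}) := x_{n+1}$ and feasible set $K_1$, and I note that its homogenization is $\widetilde{K}_1$ and the section at infinity is $K_1^\circ$, precisely as in \reff{set:K2hmg}-\reff{set:K2o}.

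First I would verify that $g^{\hm} = x_{n+1}$: since $g = x_{n+1}$ is already homogeneous of degree $1$, its highest-degree homogeneous part is itself, so the certificate equation $\langle x_{n+1}, \Delta\ty\rangle = -1$ in \reff{<x0,Dty>=-1:ray} is literally the condition $\langle g^{\hm}, z\rangle = -1$ appearing in Theorem~\ref{thm:unbounded:pop}(ii) with $z = \Delta\ty = \lmd v = \lmd[v]_1$ and the single point $u = v \in K_1^\circ$. Next I would check the closure hypothesis: Theorem~\ref{thm:unbounded:pop}(ii) requires $(0,u)\in \cl{\widetilde{K}_1\cap\{x_0>0\}}$, which is exactly the assumed condition $(0,v)\in\cl{\widetilde{K}_1\cap\{x_0>0\}}$. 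Therefore Theorem~\ref{thm:unbounded:pop}(ii) applies directly and yields that \reff{minmax:x0>=f:K} is unbounded below.

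Then I would transfer unboundedness from \reff{minmax:x0>=f:K} back to \reff{min:max(f):K}. This uses the equivalence stated in the text just before \reff{minmax:x0>=f:K}: the min-max problem \reff{min:max(f):K} is unbounded below if and only if \reff{minmax:x0>=f:K} is unbounded below. The intuition is that the constraints $-(-x_{n+1})^{d_i} - f_i(x)\ge 0$ encode $x_{n+1}\ge f_i(x)$ in a way compatible with homogenization, so driving $x_{n+1}\to -\infty$ along feasible points forces $\max_i f_i(x)\to -\infty$ as well. Having established that \reff{min:max(f):K} is unbounded below, I invoke Theorem~\ref{thm: exist of Pareto}(i), which states that if \reff{min:max(f):K} is unbounded below then there are no weakly Pareto points. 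This completes the chain.

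The main obstacle I anticipate is bookkeeping the dimension bump: the ambient variable is $(x,x_{n+1})$ rather than $x$, so I must be careful that the homogenizing variable is $x_0$, that the degree of the objective $g = x_{n+1}$ is $1$ (hence $d_1 = \lceil 1/2\rceil = 1$ and $[v]_1$ is the correct monomial vector), and that $K_1^\circ$ as defined in \reff{set:K2o} really is the $x_0 = 0$ section of $\widetilde{K}_1$ with $g^{\hm}(x,x_{n+1}) = x_{n+1}$ surviving the homogenization correctly. Apart from confirming these identifications, the proof is a direct citation of Theorem~\ref{thm:unbounded:pop}(ii) followed by Theorem~\ref{thm: exist of Pareto}(i), with no genuinely new estimate required.
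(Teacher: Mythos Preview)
Your proposal is correct and follows essentially the same route as the paper: apply Theorem~\ref{thm:unbounded:pop}(ii) with $g^{\hm}=x_{n+1}$ and $K^\circ$ replaced by $K_1^\circ$ to conclude \reff{minmax:x0>=f:K} is unbounded below, then use the equivalence with \reff{min:max(f):K} and finish via Theorem~\ref{thm: exist of Pareto}(i). Your extra bookkeeping about the dimension bump and the identification $\Delta\ty=\lmd v = \lmd[v]_1$ is accurate and simply makes explicit what the paper leaves implicit.
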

\proof
The unboundedness of \reff{minmax:x0>=f:K}
is implied by the item (ii) of Theorem~\ref{thm:unbounded:pop},
for the case that $g^\hm  \coloneqq  x_{n+1}$ and $K^\circ$ is replaced by $K_1^\circ$.
Note that \reff{min:max(f):K} is unbounded below if and only if
\reff{minmax:x0>=f:K} is unbounded below.
So, \reff{min:max(f):K} is also unbounded below.
By Theorem~\ref{thm: exist of Pareto}, there are no weakly Pareto points.
  \endproof

The tms $\Delta \ty = \lmd v$ satisfying \reff{<x0,Dty>=-1:ray}
can be obtained by Algorithm~\ref{alg:ubd} with a minor variation.
The only difference is to choose a generic
$R \in \inte{ \Sigma[x,x_{n+1}]_{2d_1} }$
and then solve the hierarchy of moment relaxations:
\beq \label{min:txRtx:K2o}
\left\{ \begin{array}{cl}
\min & \langle R, z \rangle \\
\st  &  \langle x_{n+1}, z\rangle = -1,  \\
     &   L^{(k)}_{\|(x, x_{n+1})\|^2-1}[z] = 0 ,   \\
     & L^{(k)}_{c_i^{hom} }[z] = 0 \, (i \in \mcal{E}), \\
     &  L^{(k)}_{c_j^{hom} }[z] \succeq 0 \, (j \in \mcal{I}),   \\
     &  L^{(k)}_{h_i}[z] \succeq 0  \, (i \in [m]), \\
     & M_k[z] \succeq 0, \, z \in \re^{ \N^{n+1}_{2k} }.
\end{array} \right.
\eeq
In the above, each $h_i  \coloneqq -(-x_0)^{d_i} - f_i^{hom}(x)$.
The convergence property for the hierarchy of \reff{min:txRtx:K2o}
is similar to that for Theorem~\ref{thm:cvg:ubd}.

\begin{example}
Consider the MOP with objectives
\[
\baray{rcl}
f_1 &=& (x_1x_2+x_3x_4)(x_1x_4+x_2x_3)+x_1^2+x_2^2+x_3^2+x_4^2,  \\
f_2 &=&  x_1^3x_2^2 + x_2^3x_3^2 + x_3^3x_4^2+x_4^3x_1^2,  \\
f_3 &=&  x_1^4-x_2^4+x_3^4-x_4^4+x_1x_2x_4+x_1x_3x_4,   \\
f_4 &=& (x_1-x_2)(x_3-x_4)^2+(x_1-x_3)(x_2-x_4)^2+ \\
          & & \quad (x_1-x_4)(x_2-x_3)^2+  x_1x_2+x_2x_3+x_3x_4,
\earay
\]
and with the constraints $x_1x_2x_3 \ge 1$, $x_2x_3x_4 \ge 1$.
Solving the moment relaxation~\reff{min:txRtx:K2o}
gives the feasible point $\Dt \ty = 3.3597[v]_5$ with
\[
v = (v_1, v_2, v_3, v_4, v_5) =
(-0.2761, 0.8737,  0.0000,   -0.2680, -0.2976).
\]
The set $K_1$ is not closed at infinity, but $(0,v)$ still belongs to
$cl \big( \widetilde{K}_1 \cap \{ x_0 >0 \} \big)$.
This is implied by Lemma~\ref{lemma: cond closure},
since $\Delta x=(0,0,-1,0,0)^T$ satisfies the condition \eqref{eq:direction condition}.
By Theorem~\ref{thm:noWPP}, there is no weakly Pareto point.
\end{example}

\subsection{Nonexistence of PPs}
\label{ssc:noPP}

When there are no weakly Pareto points,
there must exist no Pareto points.
So Theorem~\ref{thm:noWPP} is also applicable to
detect nonexistence of Pareto points.
However, a Pareto point may not exist while weakly Pareto points exist.
This section discusses how to detect nonexistence of
Pareto points for this case.

We consider the optimization \reff{detect:Pareto:u} with $x^* \in K$.
A Pareto point exists if and only if \reff{detect:Pareto:u}
is bounded below and has a minimizer for some $x^*\in K$.
The ``if" implication is clear. When $x^*$ itself is a Pareto point, then
$x^*$ must be a minimizer for \reff{detect:Pareto:u}.
This explains the ``only if" implication.
Let $K(x^*)$ be the feasible set of \reff{detect:Pareto:u}
determined by $x^*$ and
let $\widetilde{K}(x^*)$ be the homogenization of $K(x^*)$
similarly as in \eqref{set:tldK}.
Suppose $K(x^*)$ is closed at $\infty$.
Then \reff{detect:Pareto:u} is bounded below if and only if
$\widetilde{ f_e }(\tilde{x}) - \gamma  x_0^{d}
\in \mathscr{P}_d( \widetilde{K}(x^*))$ for some $\gamma$.
We consider the linear conic optimization
\beq \label{max:gamma in Ky}
\max \quad \gamma \quad
\st  \quad \widetilde{ f_e }(\tilde{x})  - \gamma  x_0^{d}
\in \mathscr{P}_d( \widetilde{K}(x^*)) .
\eeq
Pareto points do not exist if \reff{detect:Pareto:u}
is unbounded below for all $x^*\in K$. This is equivalent to that
\reff{max:gamma in Ky} is infeasible for all $x^*\in K$.
The dual optimization of \reff{max:gamma in Ky} is
\beq \label{min<sumf,checky>}
\min \quad \langle \widetilde{ f_e }, \tilde{y} \rangle  \quad
\st \quad \langle x_0^{d}, \tilde{y} \rangle = 1, \,
       \tilde{y} \in \mathscr{R}_d( \widetilde{K}(x^*) ).
\eeq
By weak duality, \reff{max:gamma in Ky} is infeasible if \reff{min<sumf,checky>}
is unbounded below. The problem \reff{min<sumf,checky>}
is feasible for all $x^*\in K$.
Therefore, \reff{min<sumf,checky>} is unbounded below if
there is a decreasing ray $\Delta \tilde{y}$:
\beq \label{7.24}
\langle  \widetilde{ f_e }, \Delta \tilde{y} \rangle   = -1, \quad
\langle x_0^{d}, \Delta \tilde{y} \rangle = 0, \quad
\Delta \tilde{y} \in \mathscr{R}_d( \widetilde{K}(x^*) ).
\eeq
Since $x_0 \ge 0$ on $\widetilde{K}(x^*)$,
$\langle x_0^{d}, \Delta \check{y} \rangle = 0$
if and only if every representing measure for
$\Delta \check{y}$ is supported in the hyperplane $x_0=0$.
Hence, the existence of $\Delta \tilde{y}$
satisfying \reff{7.24} is equivalent to
the existence of $\Delta \check{y}$ satisfying
\beq \label{<x0,sumf>=-1}
 \begin{array} {c}
 \langle f_e^{hom},  \Delta \check{y} \rangle  = -1, \quad
 \Delta \check{y} \in \mathscr{R}_d( \widetilde{K}_0^* ).
\end{array}
\eeq
where $f_e^{hom}(x) \coloneqq \widetilde{ f_e }(0,x)$ and $K_0^*$
is the section $x_0=0$ of $K(x^*)$:
\beq  \label{set:K^*_0}
 K_0^*\,  \coloneqq  \,
 \left\{ x \left|
 \begin{array} {c}
    c_j^{hom}(x)  = 0 \, (j \in \mcal{E}),  \\
    c_j^{hom}(x) \ge 0 \, (j \in \mcal{I}),  \\
   - f_i^{hom}(x)  \ge 0 \, (i \in [m]),  \\
    x^Tx = 1.
 \end{array} \right. \right\} .
\eeq
It is important to observe that $K^*_0$ and \reff{<x0,sumf>=-1}
do not depend on $x^*$. If there exists
$\Delta \check{y}$ satisfying \reff{<x0,sumf>=-1},
then \reff{detect:Pareto:u}
is unbounded below for all $x^*\in K$,
and hence there are no Pareto points.
This implies the following theorem.

\begin{theorem}\label{thm:noPPs}
Suppose $K(x^*)$ is closed at infinity for all $x^*\in K$.
If there is $\Delta \check{y}$ satisfying \reff{<x0,sumf>=-1},
then \reff{detect:Pareto:u} is unbounded below for all $x^*\in K$
and hence Pareto points do not exist.
\end{theorem}

Theorem \ref{thm:noPPs} only shows nonexistence of Pareto points,
but it does not imply nonexistence of weakly Pareto points. For instance,
consider the MOP
\[
\left\{ \baray{rl}
\min  &  (x_1, x_2)  \\
\st   &  x_1 \ge 0.
\earay \right.
\]
The tms
$\Delta \tilde{y}  \coloneqq  [(0,-1)]_1 $ satisfies \reff{<x0,sumf>=-1},
so there are no Pareto points.
But each $(0,x_2)$ is a weakly Pareto point.
The existence of $\Delta \tilde{y}$ satisfying \reff{<x0,sumf>=-1}
can be checked by applying Algorithm~\ref{alg:ubd} similarly,
with the polynomial $g_1  \coloneqq  f_e^\hm$ and the set $K_0^*$.
The properties are summarized in
Theorems~\ref{thm:unbounded:pop} and \ref{thm:cvg:ubd}.

\begin{example}
Consider the objectives
\[
\baray{rcl}
f_1 &=& x_1^4 + x_3^4 + (x_1x_2)^2+ (x_2x_3)^2+ (x_3x_4)^2 + x_1x_2x_3x_4, \\
f_2 &=& x_1^4+ x_2^4+x_3^4+x_4^4 -2x_2^4 - x_1^3x_2 - x_3^3x_4,
\earay
\]
and the constraint $x_1x_2x_3x_4 \ge 0$.
Since $f_1(0,t,0,0)=0 $ is the minimum value,
the point $(0,t,0,0)$ is a weakly Pareto point for all $t\in \re$.
Since all the polynomials are homogeneous,
$K(x^*)$ is closed at infinity for all $x^* \in K$.
By Algorithm~\ref{alg:ubd}, we get
$\Delta \tilde{y} =1.0023[u]_4$ satisfying \reff{<x0,sumf>=-1},
for $u=(0.0000,-0.9994,0.0000,0.0339)$.
Hence, there is no Pareto point.
\end{example}

\section{Conclusions and discussions}
\label{sc:con}

This paper studies multi-objective optimization given by polynomials.
We characterize the convex geometry for (weakly) Pareto values
and give convex representations for them.
For LSPs, we show how to use tight relaxations to solve them,
how to find proper weights, and how to detect
nonexistence of proper weights.
For CSPs, we show how to solve them by moment relaxations.
Furthermore, we show how to check if a given point is a
(weakly) Pareto point and how to detect
existence or nonexistence of (weakly) Pareto points.
To detect nonexistence of proper weights and (weakly) Pareto points,
we also show how to detect unboundedness of polynomial optimization.

There are some open questions for studying these topics.
To detect nonexistence of (weakly) Pareto points,
or to detect nonexistence of proper weights,
we need to check unboundedness of polynomial optimization.
This is discussed in Section~\ref{sc:unbounded}.
A feasible point for the system \reff{<ghm,z>=-1:Rd(tK)}
is only a sufficient condition for unboundedness of
the optimization \reff{pop:unbounded:fcij}, but it may not be necessary.

\begin{question}
When \reff{<ghm,z>=-1:Rd(tK)} is infeasible,
what is a computationally convenient certificate for
unboundedness of \reff{pop:unbounded:fcij}?
\end{question}

Another important question is to detect nonexistence of proper weights.
This is discussed in Subsection~\ref{ssc:noprop:weight}.
We have seen that \reff{-1:Dy:RdK}
is sufficient for the proper weight set $\mcal{W} = \emptyset$,
but it may not be necessary.

\begin{question}
When \reff{-1:Dy:RdK} does not have a feasible point,
how can we detect nonexistence of proper weights?
\end{question}

In Subsections~\ref{ssc:noWPP} and \ref{ssc:noPP},
we discussed how to detect nonexistence of (weakly) Pareto points.
Under certain conditions, we have shown that
\reff{<x0,Dty>=-1:ray} implies nonexistence of weakly Pareto points
and \reff{<x0,sumf>=-1} implies nonexistence of Pareto points.
However, they may not be necessary for nonexistence.

\begin{question}
Beyond \reff{<x0,Dty>=-1:ray} and \reff{<x0,sumf>=-1},
what are computationally convenient certificates for
nonexistence of (weakly) Pareto points?
\end{question}

The above questions are mostly open, to the best of the authors' knowledge.
They are interesting future work.

\bigskip
\noindent
{\bf Acknowledgement}
Jiawang Nie is partially supported by the NSF grant
DMS-2110780.

\appendix

\section{Unboundedness in Polynomial Optimization}
\label{sc:unbounded}

This section discusses how to detect unboundedness of
a polynomial optimization problem.
This question is very important for detecting nonexistence
of proper weights and (weakly) Pareto points,
in Section~\ref{sc:LSP} and Section~\ref{sc:detect}.

For a polynomial $g(x)$ of degree $d$, consider the optimization
\beq \label{pop:unbounded:fcij}
\inf \quad g(x) \quad
\st \quad x \in K.
\eeq
The feasible set $K$ is the same as for \reff{prob:mop}.
When $K$ is unbounded, \reff{pop:unbounded:fcij}
may be unbounded below, i.e., there exists a sequence
$\{u_k\} \subseteq K$ such that $g(u_k) \to -\infty$.
We discuss how to detect unboundedness of \reff{pop:unbounded:fcij}.
{Equivalently, the problem \reff{pop:unbounded:fcij}
is unbounded below if and only if
\[
\inf \{g(x)|x\in K \} \, = \, -\infty .
\]}
The homogenization of the set $K$ is
($\tx  \coloneqq   (x_0, x)$ is the homogenizing variable)
\beq \label{set:tldK}
\widetilde{K} \,  \coloneqq  \, \left\{
\tx 
\left|\baray{rcl}
 \tilde{c}_i(\tx) &=& 0 \, (i \in \mcal{E}), \\
 \tilde{c}_j(\tx) &\ge& 0 \,(j \in \mcal{I}),\\
 \tx^T\tx &=& 1, \,  x_0 \ge 0
 \earay \right. \right\},
\eeq
where $\tilde{c}_i(\tx)=x_0^{deg(c_i)}c_i(x/x_0)$
is the homogenization of $c_i(x)$. The ball constraint $\tilde{x}^T\tilde{x}=1$
is added to make the set $\tilde{K}$ compact.
The constraint $x_0\ge 0$ ensures that
$\tilde{g}(\tilde{x})-\gamma x_0^{deg(g)} \ge 0$ on $\tilde{K}$
implies that $g(x)-\gamma \ge 0$ on $K$. The set $K$ is said to be
{\it closed at $\infty$} (see \cite{NieDis12}) if
\[
\widetilde{K} \, = \,
\cl{ \big\{ \tx \in \widetilde{K}:  \, x_0 > 0 \big\} } .
\]
The closeness of $K$ at $\infty$
is a genericity condition, as shown in \cite{guo2014minimizing}.
When $K$ is closed at $\infty$, the polynomial $g(x)-\gamma$
is nonnegative on $K$ if and only if its homogenization
$\tilde{g}(\tilde{x})-\gamma x_0^{deg(g)}$ is nonnegative on $\tilde{K}$.

The intersection of $\tilde{K}$ and $x_0=0$ is
\beq \label{set:K0:unbdPOP}
K^\circ \,  \coloneqq  \, \left\{ x \in \re^n
\left| \baray{rcl}
c_i^\hm(x) &=& 0 \, (i \in \mcal{E} ) , \\
c_j^\hm(x) &\ge & 0  \, (j \in \mcal{I} ),   \\
 x^Tx &=& 1,  \\
\earay\right. \right\},
\eeq
where each $c_i^\hm(x) = \tilde{c}_i(0,x)$.

\subsection{A certificate for unboundedness}
\label{ssc:cert:ubd}

The optimization \reff{pop:unbounded:fcij} is bounded below
if and only if $g$ has a lower bound $\gamma$ on $K$, i.e.,
$g -\gamma \in \mathscr{P}_d(K)$. So we consider the optimization
\beq  \label{max:gm:f-gm>=0K}
\left\{\baray{rl}
\max &  \gamma   \\
\st  &  g  - \gamma  \in \mathscr{P}_d(K).
\earay \right.
\eeq
To check infeasibility of \reff{max:gm:f-gm>=0K},
we use the homogenization trick in \cite{NieDis12}.
When $K$ is closed at $\infty$,
a polynomial $p \ge 0$ on $K$ if and only if its homogenization
$\tilde{p} \ge 0 $ on $\widetilde{K}$ (see \cite{HNY1,NieDis12}).
So, the membership $g -\gamma \in \mathscr{P}_d(K)$ is equivalent to
$\widetilde{g} -\gamma x_0^d \in \mathscr{P}_d(\widetilde{K})$,
and hence \reff{max:gm:f-gm>=0K} is the same as
\beq  \label{max:gm:f-gm>=0:hmgK}
\left\{\baray{rl}
\max & \gamma   \\
\st  &  \widetilde{g} -\gamma x_0^d \in
\mathscr{P}_d \big( \widetilde{K} \big).
\earay \right.
\eeq
The dual optimization of \reff{max:gm:f-gm>=0:hmgK} is
\beq  \label{min<x0^d,y>:mom:tK}
\left\{\baray{rl}
\min & \langle \widetilde{g}, y \rangle  \\
\st  &  \langle x_0^d, y \rangle = 1, \,
        y \in  \mathscr{R}_d \big( \widetilde{K} \big).
\earay \right.
\eeq
If \reff{min<x0^d,y>:mom:tK} is unbounded below, then
\reff{max:gm:f-gm>=0:hmgK} must be infeasible,
which implies that \reff{max:gm:f-gm>=0K}
is infeasible and \reff{pop:unbounded:fcij} is unbounded below,
when $K$ is closed at $\infty$.

When $K \ne \emptyset$, the linear conic optimization \reff{min<x0^d,y>:mom:tK}
has a feasible point. It is unbounded below
if there is a decreasing ray $\Dt y$:
\beq  \label{<ghm,Dy>=-1:mom}
\langle \widetilde{g}, \Dt y \rangle = -1, \quad
\langle x_0^d, \Dt y \rangle = 0, \quad
\Dt y \in \mathscr{R}_d \big( \widetilde{K} \big).
\eeq
If $\nu$ is a representing measure for $\Dt y$
and is supported in $\widetilde{K}$, then
\[
0 = \langle x_0^d, \Dt y \rangle = \int x_0^d \mt{d} \nu
\]
implies that
$
\supp{\nu} \subseteq   \widetilde{K} \cap \{ x_0 = 0 \}.
$
Thus, \reff{<ghm,Dy>=-1:mom} is equivalent to
\beq  \label{<ghm,z>=-1:Rd(tK)}
\langle g^\hm, z \rangle = -1, \quad
z \in \mathscr{R}_d( K^\circ ),
\eeq
where $K^\circ$ is the set as in \eqref{set:K0:unbdPOP}.
Let $d_1$ be the degree
\beq \label{deg:d1:ubd}
d_1 \,  \coloneqq  \,   \lceil d/2 \rceil  .
\eeq
To check if \reff{<ghm,z>=-1:Rd(tK)} is feasible or not,
we select a generic $R \in \inte{ \Sig[x]_{2d_1} }$ and
consider the linear moment optimization
\beq  \label{min<R,z>:gz=-1}
\left\{ \baray{rl}
\min & \langle R, z \rangle  \\
\st  & \langle g^\hm, z \rangle = -1, \,
       z \in \mathscr{R}_{2d_1}( K^\circ ) .
\earay \right.
\eeq
The following shows how to detect
unboundedness of \reff{pop:unbounded:fcij}.

\begin{theorem} \label{thm:unbounded:pop}
Let $\widetilde{K}$, $K^\circ$ be the sets as in
\reff{set:tldK}-\reff{set:K0:unbdPOP}.

\bit

\item [(i)] Suppose \reff{<ghm,z>=-1:Rd(tK)} is feasible.
If $R \in \inte{ \Sig[x]_{2d_1} }$ is generic, then \reff{min<R,z>:gz=-1}
has a unique optimizer $z^*$ and $z^* = \lmd [u]_{2d_1}$,
with $u \in K^\circ$ and $\lmd > 0$.

\item [(ii)] 
Suppose $z  \coloneqq  \lmd [u]_{d}$, with $u \in K^\circ$ and $\lmd > 0$,
is a feasible point for \reff{<ghm,z>=-1:Rd(tK)}.
If the point $(0,u) \in \cl{ \widetilde{K} \cap \{ x_0 > 0 \} }$,
then \reff{pop:unbounded:fcij} is unbounded below.

\eit
\end{theorem}
\proof
(i) Since $R$ is generic in the interior $\inte{ \Sig[x]_{2d_1} }$,
there exists $\eps >0$ such that
\[
R - \eps \|[x]_{d_0}\|^2 \in \Sig[x]_{2d_1} .
\]
Hence, for all $z \in \mathscr{R}_{2d_0}( K^\circ )$, it holds that
\[
\langle R , z \rangle \, \ge \,
\eps \langle \|[x]_{d_1}\|^2 , z \rangle \, \ge  \,
\eps \cdot \tr{ M_{d_1}[z] }.
\]
Since \reff{<ghm,z>=-1:Rd(tK)} is feasible,
the optimization~\reff{min<R,z>:gz=-1}
is also feasible, say, $z^{(0)}$ is a feasible point.
Then, \reff{min<R,z>:gz=-1} is equivalent to
\beq  \label{min<R,z>:unbd:sublevel}
\left\{ \baray{rl}
\min & \langle R, z \rangle  \\
\st &   \tr{ M_{d_1}[z] } \le \frac{1}{\eps} \langle R , z^{(0)} \rangle , \\
   & \langle f^\hm, z \rangle = -1, \\
   &  z \in \mathscr{R}_{2d_1}( K^\circ ) .
\earay \right.
\eeq
The feasible set of \reff{min<R,z>:unbd:sublevel} is compact,
so it has an optimizer, say, $z^*$.
When $R$ is generic, the optimizer $z^*$ must be unique
and it is an extreme point of the feasible set of \reff{min<R,z>:gz=-1}.
Since \reff{min<R,z>:gz=-1} has only a single equality constraint,
the optimizer $z^*$ must lie in an extreme ray of the cone
$\mathscr{R}_{2d_1}( K^\circ )$.
This means that $z^* = \lmd [u]_{2d_1}$
for a point $u \in K^\circ$ and a scalar $\lmd > 0$
(note $z^*$ is nonzero).



(ii)
Since $(0,u) \in cl \big( \widetilde{K} \cap \{ x_0 > 0 \} \big)$,
there is a sequence
\[
\{(t_k, u_k)\}_{k=1}^\infty \subseteq \widetilde{K}\cap \{x_0>0\}
\]
such that $\lim\limits_{k \to \infty} (t_k, u_k) = (0, u)$.
Note that each $t_k > 0$ and
\[
-1 = \langle g^{hom}, \lmd [u]_d \rangle
= \lmd g^{hom}(u) =  \lim_{k \to \infty}
 \lmd \tilde{g}(t_k, u_k) .
\]
Thus, for $k$ big enough,
$ \lmd \tilde{g}(t_k, u_k) \le -\frac{1}{2}$ and
\[
 \lmd \tilde{g}(t_k, u_k)  = \lmd \cdot (t_k)^d  g(u_k/t_k) \le -1/2  .
\]
This implies that $g(u_k/t_k) \le \frac{-1}{2 \lmd \cdot (t_k)^d }$
for all $k$ big enough, so $g(u_k/t_k) \to -\infty$ as $k \to \infty$.
Since each $u_k/t_k \in K$, $g$ is unbounded below on $K$.
\endproof

In computational practice,
the generic polynomial $R \in \inte{ \Sig[x]_{2d_1} }$ can be chosen as
$[x]_{d_1}^TA^TA[x]_{d_1}$, for some randomly generated square matrix $A$.

We remark that the closeness of $K$ at $\infty$
is a generic condition, as shown in \cite{guo2014minimizing}.
In Theorem~\ref{thm:unbounded:pop}(ii), we use the relaxed condition $(0,u)\in cl(\widetilde{K}\cap \{x_0>0\})$ instead of the closeness at $\infty$. For the relaxed condition, we give a sufficient condition in Lemma \ref{lemma: cond closure} to check if it is satisfied.

\begin{lemma} \label{lemma: cond closure}
    Let $\widetilde{K}$, $K^\circ$ be the sets as in
\reff{set:tldK}-\reff{set:K0:unbdPOP} and $z  \coloneqq  \lmd [u]_{d}$, with $u \in K^\circ$ and $\lmd > 0$,
be a feasible point for \reff{<ghm,z>=-1:Rd(tK)}.
If there exist $\Delta x \in \mathbb{R}^n$ and $\delta_0>0$ such that
\begin{equation}\label{eq:direction}
  c_i^{hom}(u+t\Delta x) >0 \quad \forall \, t \in (0, \delta_0), \,
  \forall \, i \in \mathcal{C} \coloneqq
  \{i\in \mathcal{E}\cup \mathcal{I}|c_i^{hom}(u)=0 \} ,
\end{equation}
then $(0,u)\in cl(\widetilde{K}\cap \{x_0>0\})$.
\end{lemma}
\proof
The constraint polynomial $\tilde{c}_i(x_0,x)$ can be rewritten as
\[
  \tilde{c}_i(x_0,x) = c_i^{hom}(x) + x_0h_i(x_0,x),
\]
for some polynomial $h_i(x_0,x)$. When $i \in \mathcal{C}$, it satisfies the condition \eqref{eq:direction}. When $i \notin \mathcal{C}$, it holds that $c_i^{hom}(u)>0$. Therefore, there are $M>0$ and $0<\delta<\delta_0$ such that
\[
c_i^{hom}(u+t\Delta x)>0 \quad \text{and} \quad  h_i(x_0,x)>-M
\]
for all $\delta>x_0,t>0$ and $i\in \mathcal{E}\cup \mathcal{I}$.
Let $\{t_k\}_{k=1}^{\infty}$ be a sequence such that
$\lim\limits_{k\to\infty} t_k = 0$ and $\delta>t_k>0$ for all $k$.
For each $k$, we define
\[
s_k  \, \coloneqq \,
\min\Big(\frac{\delta}{2k},
\{\frac{c_i^{hom}(u+t_k\Delta x)}{2M}\}_{i\in \mathcal{E}\cup \mathcal{I}}
\Big)>0.
\]
For all $i\in \mathcal{E}\cup \mathcal{I}$, it holds that
 \begin{align*}
  \tilde{c}_i(s_k,u+t_k\Delta x) &= c_i^{hom}(u+t_k \Delta x) + s_kh_i(s_k,u+t_k\Delta x) \\
  &\ge c_i^{hom}(u+t_k \Delta x) + \frac{c_i^{hom}(u+t_k\Delta x)}{2M} (-M) \\
  &\ge \frac{1}{2}c_i^{hom}(u+t_k \Delta x) \\
  &> 0.
 \end{align*}
For convenience, we denote
\[
\tilde{u}_k \coloneqq (s_k,u+t_k\Delta x)/\|(s_k,u+t_k\Delta x)\| .
\]
Each $\tilde{c}_i$ is homogeneous, so $\tilde{c}_i(\tilde{u}_k)>0$
by above inequalities.
It implies $\tilde{u}_k \in \widetilde{K}$.
The construction of sequences ensures
\[
\lim\limits_{k\to\infty} s_k = \lim\limits_{k\to \infty} t_k=0.
\]
Thus, $\lim\limits_{k\to \infty} \tilde{u}_k = (0,u)$.
So, it shows that $(0,u)\in cl(\widetilde{K}\cap \{x_0>0\})$.
\endproof

The sufficient condition \eqref{eq:direction} in Lemma \ref{lemma: cond closure}
requires that $\Delta x$ is an increasing direction for $c_i^{hom}(x)$ at $x=u$
for $i\in\mathcal{C}$. It can be checked numerically by gradients and Hessian matrices.
We denote $\mathcal{C}_0 = \{i\in \mathcal{C}|\nabla c_i^{hom}(u)= 0\}$
and $\mathcal{C}_1 = \{i\in \mathcal{C}|\nabla c_i^{hom}(u)\neq 0\}$.
The direction $\Delta x$ satisfies the condition in \eqref{eq:direction}
if it satisfies
\beq  \label{eq:direction condition}
\left\{ \baray{r}
  \Delta x^T  \nabla^2 c_i^{hom}(u) \Delta x>0 \quad
  \forall i\in \mathcal{C}_0,   \\
  \nabla c_i^{hom}(u)^T \Delta x>0 \quad
  \forall i\in \mathcal{C}_1.
  \earay \right.
\eeq
It can be formulated as the following quadratic optimization problem
\beq \label{eq:quadratic condition}
\left\{ \baray{cl}
  \max\limits_{\Delta x, a} &  \quad  a    \\
  \text{s.t.} & \Delta x^T  \nabla^2 c_i^{hom}(u) \Delta x\ge a \quad \forall i\in \mathcal{C}_0,\\
 &   \nabla c_i^{hom}(u)^T \Delta x \ge a \quad \forall i\in \mathcal{C}_1,   \\
 &    \| \Delta x \|^2  \le 1 .
\earay \right.
\eeq
There exists a direction $\Delta x$ satisfying \eqref{eq:direction condition}
if and only if the problem \eqref{eq:quadratic condition} has the maximum $a^*>0$.
The problem \eqref{eq:quadratic condition} can be solved
as a polynomial optimization problem.

\begin{example}
Consider the following optimization problem
\beq \nn
\left\{ \baray{rl}
\min  &  g(x)  \coloneqq  x_1^2+x_2^2+x_3^2 +  x_1x_2x_3   \\
\st &  c(x)\coloneqq x_1^2x_2^2(x_1^2+x_2^2)+x_3^6-3x_1^2x_2^2x_3^2 -1 = 0.
\earay \right.
\eeq
Note that $g^\hm =  x_1x_2x_3$ and
a feasible point of \reff{<ghm,z>=-1:Rd(tK)} is the tms
$3\sqrt{3}[u]_6$, for $u=\frac{1}{ \sqrt{3} }(1,1,-1)$.
One can check that $\nabla c^{hom}(u)=0$ and
$e^T\nabla^2 c^{hom}(u) e>0$ for $e=(1,1,1)^T$.
It demonstrates $(0, u)$ lies on the closure
$cl \big( \tilde{K} \cap \{ x_0 > 0 \} \big)$,
so this optimization problem is unbounded below.
\end{example}

When \reff{min<x0^d,y>:mom:tK} is unbounded below,
it is not necessary that \reff{min<x0^d,y>:mom:tK} has a decreasing ray,
i.e., the system \reff{<ghm,z>=-1:Rd(tK)} may be infeasible.
That is, \reff{<ghm,z>=-1:Rd(tK)} is sufficient for
unboundedness of \reff{pop:unbounded:fcij}, but it may not be necessary.
For instance, consider the optimization
\beq \nn
\left\{ \baray{rl}
\min  &  g(x)  \coloneqq  x_1x_2x_3 + x_1^2x_2^2(x_1^2+x_2^2)+x_3^6-3x_1^2x_2^2x_3^2  \\
\st & x_1^2+x_2^2 -2 x_3^2 = 0,\, x_1 x_2 \ge 0.
\earay \right.
\eeq
It is unbounded below, because
$ g(t,t, -t) = -t^3 \to -\infty$ as $t\to +\infty$,
while $(t, t, -t)$ is feasible for all $t \ge 0$.
However, the certificate \reff{<ghm,z>=-1:Rd(tK)} is infeasible.
This is because
\[
g^\hm  \, =  \, x_1^2x_2^2(x_1^2+x_2^2)+x_3^6-3x_1^2x_2^2x_3^2
\]
is the Motzkin polynomial and $\langle g^\hm, z \rangle \ge 0$
for all $z \in \mathscr{R}_d(K^\circ)$.
When \reff{<ghm,z>=-1:Rd(tK)} fails to be feasible,
the question of detecting unboundedness of \reff{pop:unbounded:fcij}
is mostly open.
%
%

\subsection{Solving linear moment systems}
\label{ssc:momsys}

Semidefinite relaxations can be applied to solve
\reff{<ghm,z>=-1:Rd(tK)} and \reff{min<R,z>:gz=-1}.
For more generality, we consider the moment system
\beq  \label{-1>=giz:several}
a_i \ge \langle g_i, z \rangle  \,\,(i=1,\ldots, m), \quad
z \in \mathscr{R}_d( K^\circ ),
\eeq
for given polynomials $g_1, \ldots, g_m \in \re[x]_d$
and given scalars $a_1, \ldots, a_m \in \re$.
It is worthy to note that in \reff{<ghm,z>=-1:Rd(tK)} and \reff{min<R,z>:gz=-1},
the equality is {equivalent} to the inequality like the above,
due to the conic membership condition.

Select a generic $R \in \inte{ \Sig[x]_{2d_1} }$
and consider the moment optimization
\beq  \label{<Rz>:momopt}
\left\{ \begin{array}{rl}
\min  & \langle R, z \rangle \\
\st  &  a_i - \langle g_i, z\rangle \ge 0 \,(i \in [m]), \\
     &  z \in \mathscr{R}_{2d_1}( K^\circ ) .
\end{array} \right.
\eeq
Let $d_2  \coloneqq  \max\{d_1, d_c\}$, where $d_c$ is as in \reff{deg:dc}.
For $k= d_2, d_2+1, \cdots$,
we solve the hierarchy of semidefinite relaxations
\beq  \label{min<Ry>:mom}
\left\{ \begin{array}{rl}
\min  & \langle R, z \rangle \\
\st  & a_i -\langle g_i, z\rangle \ge 0 \, \,(i=1,\ldots,m),  \\
     &  L^{(k)}_{c_i^{hom} }[z] = 0 \, (i \in \mcal{E}),   \\
     & L^{(k)}_{c_j^{hom} }[z] \succeq 0 \, (j \in \mcal{I}),   \\
     & L^{(k)}_{x^Tx-1}[z] = 0, \\
     & M_k[z] \succeq 0, \, z \in \re^{ \N^n_{2k} } .
\end{array} \right.
\eeq
Suppose $z^{(k)}$ is a minimizer of \reff{min<Ry>:mom} for a relaxation order $k$.
If there is an integer $t \in [d_c, k]$ such that
the rank condition \reff{cond:FEC} holds,
then the truncation $z^{(k)}|_{2t}$ has a $r$-atomic
representing measure supported in $K^\circ$, i.e.,
\[
z^{(k)}\big|_{2t} = \lmd_1 [u_1]_{2t} + \cdots + \lmd_r [u_r]_{2t}
\]
for scalars $\lmd_1, \ldots, \lmd_r >0$,
distinct points $u_1, \ldots, u_r \in K^\circ$
and $r = \rank \, M_t[z^{(k)}]$.
Then, the truncation $z^{(k)}|_d$ is a feasible point for \reff{-1>=giz:several}.

\begin{alg} \label{alg:ubd}
Let $k  \coloneqq  d_2$. Do the following loop:
\bit

\item [Step~1]
Solve the semidefinite relaxation \reff{min<Ry>:mom}
for a minimizer $z^{(k)}$.

\item [Step~2]
Check if there exists $t \in [d_c, k]$ such that \reff{cond:FEC} holds.
If it does, then the truncation $z^{(k)}|_d$
is a feasible point for \reff{<ghm,z>=-1:Rd(tK)}.

\item [Step~3]
If \reff{cond:FEC} fails for all $t \in [d_c, k]$,
let $k  \coloneqq  k+1$ and go to Step~1.

\eit
\end{alg}

Algorithm~\ref{alg:ubd} can be implemented in
the software {\tt GloptiPoly~3} \cite{GloPol3}.
The following is the convergence property
for the hierarchy of relaxations \reff{min<Ry>:mom}.

\begin{theorem}  \label{thm:cvg:ubd}
Assume the system \reff{-1>=giz:several} is feasible
and $R \in \inte{ \Sig[x]_{2d_1} }$ is generic.
Then, we have:
\bit

\item [(i)] The optimization \reff{<Rz>:momopt} has a unique minimizer $z^*$ and
\beq \label{atomic:z*}
z^*  = \lmd_1 [u_1]_{2d_1} + \cdots + \lmd_r [u_r]_{2d_1}
\eeq
for scalars $\lmd_1, \ldots, \lmd_r >0$,
distinct points $u_1, \ldots, u_r \in K^\circ$
and $r \le m$.

\item [(ii)]
For each fixed $t \ge d_1$,
the sequence $\{ z^{(k)}|_{2t} \}_{k=d_2}^\infty$ is bounded
and every accumulation point $z^{**}$ of
$\{ z^{(k)}|_{2t} \}_{k=d_2}^\infty$
satisfies $z^* = z^{**}|_{2d_1}$.
\eit
\end{theorem}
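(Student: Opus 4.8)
For part~(i), the plan is to combine a coercivity-plus-compactness argument for existence with an extreme-point analysis for the atomic structure. First I would reuse the genericity of $R$ exactly as in the proof of Theorem~\ref{thm:unbounded:pop}(i): since $R \in \inte{\Sig[x]_{2d_1}}$, there is $\eps>0$ with $R-\eps\|[x]_{d_1}\|^2 \in \Sig[x]_{2d_1}$, so $\langle R,z\rangle \ge \eps\,\tr{M_{d_1}[z]}$ for every $z\in\mathscr{R}_{2d_1}(K^\circ)$. Because $K^\circ$ lies on the unit sphere, the constraint coming from $x^Tx-1$ gives $\langle (x^Tx)^s,z\rangle = z_0$ for $s\le d_1$, hence $\langle x^{2\af},z\rangle \le z_0$ for every $|\af|\le d_1$; by positive semidefiniteness this bounds all entries of $M_{d_1}[z]$ in terms of $z_0 \le \tr{M_{d_1}[z]}$. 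Intersecting the feasible set of \reff{<Rz>:momopt} with a sublevel set $\{\langle R,z\rangle \le c\}$ is therefore bounded, and since $K^\circ$ is compact the cone $\mathscr{R}_{2d_1}(K^\circ)$ is closed, so the sublevel set is compact and a minimizer $z^*$ exists. Uniqueness follows from the standard fact that a generic $R$ exposes a single extreme point.

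The substantive part is the bound $r\le m$. I would take a finitely atomic representation $z^*=\sum_{j=1}^r \lmd_j[u_j]_{2d_1}$ with $r$ minimal (\cite{Lau09}); minimality forces the vectors $[u_j]_{2d_1}$ to be linearly independent, since otherwise a nontrivial relation lets one drive a weight to zero and reduce the atom count. Let $I^*\subseteq[m]$ index the active inequalities $a_i=\langle g_i,z^*\rangle$. If $r>|I^*|$, there is a nonzero $\Dt\lmd$ with $\sum_j \Dt\lmd_j\langle g_i,[u_j]_{2d_1}\rangle=0$ for all $i\in I^*$, because this is an underdetermined linear system. Then $z^*\pm\eps\sum_j \Dt\lmd_j[u_j]_{2d_1}$ are both feasible for small $\eps$ and, by linear independence, distinct, so $z^*$ is their nontrivial midpoint, contradicting extremality. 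Hence $r\le|I^*|\le m$.

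For part~(ii), I would first bound the relaxation minimizers. Since \reff{min<Ry>:mom} relaxes \reff{<Rz>:momopt}, we have $\langle R,z^{(k)}\rangle \le \langle R,z^*\rangle$; applying the same SOS inequality with $M_{d_1}[z^{(k)}]\succeq 0$ gives $\eps\,\tr{M_{d_1}[z^{(k)}]}\le\langle R,z^{(k)}\rangle\le\langle R,z^*\rangle$, so $z^{(k)}_0$ is bounded uniformly in $k$. The sphere relation $L^{(k)}_{x^Tx-1}[z^{(k)}]=0$ then propagates this to all diagonal entries $\langle x^{2\af},z^{(k)}\rangle \le z^{(k)}_0$ with $|\af|\le k$, bounding $M_k[z^{(k)}]$ and hence $z^{(k)}|_{2t}$ for every fixed $t$ once $k\ge t$. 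Given an accumulation point $z^{**}$ of $\{z^{(k)}|_{2t}\}$, I would pass to a subsequence and, by a diagonal extraction, obtain a limit tms $\hat z$ convergent at all degrees. The limit satisfies $M_s[\hat z]\succeq 0$, all localizing conditions for $c_i^{hom},c_j^{hom},x^Tx-1$, and $a_i\ge\langle g_i,\hat z\rangle$ at every order; since the sphere makes $K^\circ$ compact and the quadratic module archimedean, $\hat z$ is the moment sequence of a measure on $K^\circ$, so $\hat z|_{2d_1}\in\mathscr{R}_{2d_1}(K^\circ)$ is feasible for \reff{<Rz>:momopt}. Finally, along the subsequence $\langle R,z^{(k_l)}\rangle\to\langle R,\hat z|_{2d_1}\rangle$; each term is $\le\langle R,z^*\rangle$ by relaxation, while feasibility forces $\langle R,\hat z|_{2d_1}\rangle\ge\langle R,z^*\rangle$. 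Thus $\hat z|_{2d_1}$ is optimal, and uniqueness from part~(i) yields $z^{**}|_{2d_1}=\hat z|_{2d_1}=z^*$.

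The main obstacle I expect is the atom-counting in part~(i): making the perturbation argument airtight requires the reduction to a linearly independent atomic support together with careful bookkeeping of the active constraints, since distinct atoms need not give independent moment vectors at the fixed degree $2d_1$. The boundedness and limiting steps in part~(ii) are comparatively routine once the sphere constraint is used to control all moments, and the identification of the limit rests only on the uniqueness already established.
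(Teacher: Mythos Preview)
Your proposal is correct, and in part~(ii) it is essentially the paper's argument: bound $z_0^{(k)}$ via an SOS slack in $R$, propagate the bound to all even moments using the sphere relation $L_{x^Tx-1}^{(k)}[z]=0$, pass to a limit that lies in $\mathscr{R}_{2t}(K^\circ)$ by archimedeanness, and invoke uniqueness. The paper phrases the limiting step via the nested projections $G_k$ and \cite[Prop.~3.3]{linmomopt} rather than a diagonal extraction, but the content is the same.

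Where you genuinely differ is the atom bound $r\le m$ in part~(i). You argue it intrinsically: take a minimal atomic representation so that the vectors $[u_j]_{2d_1}$ are linearly independent, then perturb the weights in the kernel of the active constraint matrix to contradict extremality of $z^*$ unless $r\le |I^*|\le m$. This is sound; the linear-independence reduction you flag as the ``main obstacle'' is in fact the standard Carath\'eodory step and works exactly as you describe. The paper instead fixes \emph{any} finite atomic decomposition of $z^*$ and looks at the auxiliary linear program
\[
\min_{\tau\ge 0}\ \sum_{j} \tau_j R(u_j)\quad\text{s.t.}\quad a_i \ge \sum_j \tau_j g_i(u_j)\ (i\in[m]),
\]
observes it has the same optimal value as \reff{<Rz>:momopt}, and uses the basic-solution structure of LP to find an optimal $\tau^*$ with at most $m$ nonzero entries; uniqueness of $z^*$ then forces $\sum_j \tau_j^*[u_j]_{2d_1}=z^*$. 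The LP route is shorter and sidesteps the linear-independence bookkeeping, while your direct extreme-point argument is more self-contained and does not require invoking LP vertex theory. Either way the conclusion follows.
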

\proof
(i) As in the proof for item (i) of Theorem~\ref{thm:unbounded:pop},
the trace of $M_{d_1}[z]$ can be bounded by a constant.
Similarly, it implies that \reff{<Rz>:momopt} has a minimizer $z^*$.
The minimizer $z^*$ is unique, since the objective $\langle R, z\rangle$
is linear in $z$ and has generic coefficients. The membership
$z^* \in \mathscr{R}_{2d_1}( K^\circ )$ implies that
$z^*$ has a decomposition like \reff{atomic:z*}.
We only need to show that $r \le m$.
Consider the following linear program in $(\tau_1, \ldots, \tau_r)$:
\beq  \label{min:nu:z*dcomp:unbd}
\left\{ \baray{cl}
\min & \tau_1 R(u_1) + \cdots + \tau_r R(u_r)  \\
\st & -1 \ge \sum\limits_{j=1}^r \tau_j g_i(u_j), \, i= 1,\ldots, m,  \\
  & \tau_1 \ge 0, \ldots, \tau_r \ge 0.
\earay \right.
\eeq
Note that \reff{min<Ry>:mom} and
\reff{min:nu:z*dcomp:unbd} have the same optimal value.
Since it is a linear program, \reff{min:nu:z*dcomp:unbd}
has a minimizer $\tau^* = (\tau_1^*, \ldots, \tau_r^*)$
of at most $m$ nonzero entries (see \cite{Bertsimas1997linear}).
This implies that the number $r$ in \reff{atomic:z*}
can be chosen to be at most $m$.

(ii) Since $R$ lies in the interior of $\Sig[x]_{2d_1}$,
there is $\eps >0$ such that $R - \eps \in \Sig[x]_{2d_1}$.
Then the constraint $M_k[z] \succeq 0$ implies that
\[
\langle R, z \rangle - \eps (z)_0 =
\langle R - \eps, z \rangle \ge 0.
\]
So we get that
$(z)_0 \le \frac{\langle R, z \rangle}{ \eps}$.
The optimal value of \reff{min<Ry>:mom} is always less than or equal to
that of \reff{min<R,z>:gz=-1}.
Therefore, the sequence $\{ \big( z^{(k)} \big)_0 \}_{k=d_2}^\infty$
is bounded. Moreover, the constraint $L^{(k)}_{x^Tx-1}[z] = 0$
implies that
\[
(z)_{2\af} = (z)_{2e_1+2\af} + \cdots + (z)_{2e_n+2\af}
\ge \max \big((z)_{2e_1+2\af}, \ldots,  (z)_{2e_n+2\af} \big)
\]
for all monomial powers $\af$. The diagonal entries of
the psd moment matrix $M_k[z]$ are precisely the entries
$(z)_{2\bt}$, for powers $\bt$. This implies that the sequence
$\{ \big( z^{(k)} \big)_{2\bt} \}_{k=d_2}^\infty$
is bounded for all powers $\bt$.
Therefore, for each fixed $t \ge d_1$,
the sequence of each diagonal entry of $M_t[z^{(k)}]$ is bounded,
and so is the truncated sequence $\{ z^{(k)}|_{2t} \}_{k=d_2}^\infty$.
Let $\mcal{H}_k$ be the set of feasible points $z$ in
\reff{min<Ry>:mom} for the relaxation order $k$,
except the first $m$ inequalities. Denote the truncation:
\[
G_k \,  \coloneqq  \, \{ z|_{2t}: \, z \in \mcal{H}_k \}.
\]
Then, $G_{k+1} \subseteq G_k$ for all $k$.
Since there is a sphere constraint $x^Tx=1$,
the quadratic module for the set $K^\circ$ is archimedean,
so (see Prop.~3.3 of \cite{linmomopt})
\[
\mathscr{R}_{2t}(K^\circ) = \bigcap_{k=d_2}^\infty G_k .
\]
If $z^{**}$ is an accumulation point of $\{ z^{(k)}|_{2t} \}_{k=d_2}^\infty$,
then $z^{**} \in G_k$ for all $k$ and hence
$z^{**} \in \mathscr{R}_{2t}(K^\circ)$.
Note that the truncation $z^{**}|_{2d_1}$
is also a minimizer of \reff{<Rz>:momopt}.
Since the minimizer is unique,
we must have $z^* = z^{**}|_{2d_1}$.
\endproof

The optimization \reff{<Rz>:momopt} is a linear conic optimization problem
with the moment cone. It can also be viewed as a generalized moment problem.
When the constraining set is compact,
we refer to \cite{Las08,linmomopt} for how to solve it;
when the set is unbounded, we refer to the recent work
\cite{HNY1,HNY2}.


\begin{thebibliography}{60}
  \providecommand{\natexlab}[1]{#1}
  \providecommand{\url}[1]{\texttt{#1}}
  \expandafter\ifx\csname urlstyle\endcsname\relax
    \providecommand{\doi}[1]{doi: #1}\else
    \providecommand{\doi}{doi: \begingroup \urlstyle{rm}\Url}\fi

\bibitem{anagnostopoulos2012online}
Anagnostopoulos~A, Becchetti~L, Castillo~C, Gionis~A, and
  Leonardi~S.
\newblock Online team formation in social networks.
\newblock In \emph{Proceedings of the 21st international conference on World
  Wide Web}, pages 839--848, 2012.

\bibitem{bao2007variational}
Bao~T and Mordukhovich~B.
\newblock Variational principles for set-valued mappings with applications to
  multiobjective optimization.
\newblock \emph{Control Cybernet}, 36\penalty0 (3):\penalty0 531--562, 2007.

\bibitem{bao2010}
Bao~TQ and Mordukhovich~BS.
\newblock Relative pareto minimizers for multiobjective problems: existence and
  optimality conditions.
\newblock \emph{Math. Program}, 122\penalty0 (2):\penalty0 301--347, 2010.

\bibitem{Bertsimas1997linear}
Bertsimas~D and Tsitsiklis~JN.
\newblock \emph{Introduction to linear optimization}, volume~6.
\newblock Athena scientific Belmont, MA, 1997.

\bibitem{blanco2014}
Blanco~V, Puerto~J, and El~Haj Ben~Ali~S.
\newblock A semidefinite programming approach for solving multiobjective linear
  programming.
\newblock \emph{J. Global Optim.}, 58\penalty0 (3):\penalty0 465--480, 2014.

\bibitem{chen2017mean}
Chen~B, Lin~Y, Zeng~W, Xu~H, and Zhang~D.
\newblock The mean-variance cardinality constrained portfolio optimization
  problem using a local search-based multi-objective evolutionary algorithm.
\newblock \emph{Applied Intelligence}, 47:\penalty0 505--525, 2017.

\bibitem{cho2017survey}
Cho~J, Wang~Y, Chen~R, Chan~KS, and Swami~A.
\newblock A survey on modeling and optimizing multi-objective systems.
\newblock \emph{IEEE Communications Surveys \& Tutorials}, 19\penalty0
  (3):\penalty0 1867--1901, 2017.

\bibitem{clayton1982goal}
Clayton~ER, Weber~WE, and Taylor~III~BW.
\newblock A goal programming approach to the optimization of multi response
  simulation models.
\newblock \emph{IIE Transactions}, 14\penalty0 (4):\penalty0 282--287, 1982.

\bibitem{CurFia96}
Curto~RE and Fialkow~LA.
\newblock \emph{Solution of the truncated complex moment problem for flat
  data}, volume 568.
\newblock American Mathematical Soc., 1996.

\bibitem{CurFia05}
Curto~RE and Fialkow~LA.
\newblock Truncated k-moment problems in several variables.
\newblock \emph{J. Operator Theory}, pages 189--226, 2005.

\bibitem{donoso2016multi}
Donoso~Y and Fabregat~R.
\newblock \emph{Multi-objective optimization in computer networks using
  metaheuristics}.
\newblock CRC Press, 2016.

\bibitem{emmerich2018tutorial}
Emmerich~MT and Deutz~AH.
\newblock A tutorial on multiobjective optimization: fundamentals and
  evolutionary methods.
\newblock \emph{Nat. Comput.}, 17:\penalty0 585--609, 2018.

\bibitem{FNZ18}
Fan~J, Nie~J, and Zhou~A.
\newblock Tensor eigenvalue complementarity problems.
\newblock \emph{Math. Program}, 170:\penalty0 507--539, 2018.

\bibitem{fleming1986computer}
Fleming~PJ.
\newblock Computer aided design of regulators using multiobjective
  optimization.
\newblock In \emph{Control Applications of Nonlinear Programming and
  Optimization}, pages 47--52. Elsevier, 1986.

\bibitem{geiger2011}
Geiger~MJ and Sevaux~M.
\newblock The biobjective inventory routing problem--problem solution and
  decision support.
\newblock In \emph{International Conference on Network Optimization}, pages
  365--378. Springer, 2011.

\bibitem{guo2014minimizing}
Guo~F, Wang~L, and Zhou~G.
\newblock Minimizing rational functions by exact jacobian sdp relaxation
  applicable to finite singularities.
\newblock \emph{J. Global Optim.}, 58:\penalty0 261--284, 2014.

\bibitem{HelNie10}
Helton~JW and Nie~J.
\newblock Semidefinite representation of convex sets.
\newblock \emph{Math. Program}, 122:\penalty0 21--64, 2010.


\bibitem{HenLas05}
Henrion~D and Lasserre~JB.
\newblock Detecting global optimality and extracting solutions in gloptipoly.
\newblock In \emph{Positive polynomials in control}, pages 293--310. Springer,
  2005.

\bibitem{GloPol3}
Henrion~D, Lasserre~JB, and L{\"o}fberg~J.
\newblock Gloptipoly 3: moments, optimization and semidefinite programming.
\newblock \emph{Optim. Methods Softw.}, 24\penalty0 (4-5):\penalty0 761--779,
  2009.

\bibitem{HNY1}
Huang~L, Nie~J, and Yuan~YX.
\newblock Homogenization for polynomial optimization with unbounded sets.
\newblock \emph{Math. Program}, 200\penalty0 (1):\penalty0 105--145,
  2023{\natexlab{a}}.

\bibitem{HNY2}
Huang~L, Nie~J, and Yuan~YX.
\newblock Generalized truncated moment problems with unbounded sets.
\newblock \emph{J. Sci. Comput.}, 95\penalty0 (1):\penalty0 15,
  2023{\natexlab{b}}.

\bibitem{vecopt2011}
Jahn~J.
\newblock \emph{Vector optimization}.
\newblock Springer, 2009.

\bibitem{JiaoLee21}
Jiao~L and Lee~JH.
\newblock Finding efficient solutions in robust multiple objective optimization
  with sos-convex polynomial data.
\newblock \emph{Ann. Oper. Res.}, 296\penalty0 (1-2):\penalty0 803--820, 2021.


\bibitem{jiao2020}
Jiao~L, Lee~JH, Ogata~Y, and Tanaka~T.
\newblock Multi-objective optimization problems with sos-convex polynomials
  over an lmi constraint.
\newblock \emph{Taiwanese J. Math.}, 24\penalty0 (4):\penalty0 1021--1043,
  2020{\natexlab{a}}.

\bibitem{JLZ20}
Jiao~L, Lee~JH, and Zhou~Y.
\newblock A hybrid approach for finding efficient solutions in vector
  optimization with sos-convex polynomials.
\newblock \emph{Oper. Res. Lett.}, 48\penalty0 (2):\penalty0 188--194,
  2020{\natexlab{b}}.

\bibitem{jones2010practical}
Jones~D, Tamiz~M.
\newblock \emph{Practical goal programming}, volume 141.
\newblock Springer, 2010.

\bibitem{KimPhamTuy19}
Kim~DS, Pham~TS, and Tuyen~NV.
\newblock On the existence of pareto solutions for polynomial vector
  optimization problems.
\newblock \emph{Math. Program}, 177\penalty0 (1-2):\penalty0 321--341, 2019.

\bibitem{koski1987norm}
Koski~J andSilvennoinen~R.
\newblock Norm methods and partial weighting in multicriterion optimization of
  structures.
\newblock \emph{Internat. J. Numer. Methods Engrg.}, 24\penalty0 (6):\penalty0
  1101--1121, 1987.

\bibitem{Las01}
Lasserre~JB.
\newblock Global optimization with polynomials and the problem of moments.
\newblock \emph{SIAM J. Optim.}, 11\penalty0 (3):\penalty0 796--817, 2001.

\bibitem{Las08}
Lasserre~JB.
\newblock A semidefinite programming approach to the generalized problem of
  moments.
\newblock \emph{Math. Program}, 112:\penalty0 65--92, 2008.

\bibitem{Las09}
Lasserre~JB.
\newblock Convexity in semialgebraic geometry and polynomial optimization.
\newblock \emph{SIAM J. Optim.}, 19\penalty0 (4):\penalty0 1995--2014, 2009.

\bibitem{LasICM}
Lasserre~JB.
\newblock The moment-sos hierarchy.
\newblock In \emph{Proceedings of the International Congress of Mathematicians:
  Rio de Janeiro 2018}, pages 3773--3794. World Scientific, 2018.

\bibitem{LasBk15}
Lasserre~JB.
\newblock \emph{An introduction to polynomial and semi-algebraic optimization},
  volume~52.
\newblock Cambridge University Press, 2015.

\bibitem{Lau05}
Laurent~M.
\newblock Revisiting two theorems of curto and fialkow on moment matrices.
\newblock \emph{Proc. Amer. Math. Soc.}, 133\penalty0 (10):\penalty0
  2965--2976, 2005.

\bibitem{Lau09}
Laurent~M.
\newblock Sums of squares, moment matrices and optimization over polynomials.
\newblock \emph{Emerging applications of algebraic geometry}, pages 157--270,
  2009.

\bibitem{LauICM}
Laurent~M.
\newblock Optimization over polynomials: Selected topics.
\newblock In \emph{Proceedings of the International Congress of
  Mathematicians}, pages 843--869. ICM, 2014.

\bibitem{LeeJiao18}
Lee~JH and Jiao~L.
\newblock Solving fractional multicriteria optimization problems with sum of
  squares convex polynomial data.
\newblock \emph{J. Optim. Theory Appl.}, 176:\penalty0 428--455, 2018.

\bibitem{JLS21}
Lee~JH, Sisarat~N, and Jiao~L.
\newblock Multi-objective convex polynomial optimization and semidefinite
  programming relaxations.
\newblock \emph{J. Global Optim.}, 80\penalty0 (1):\penalty0 117--138, 2021.

\bibitem{LHF21}
Liu~DY, Hu~R, and Fang~YP.
\newblock Solvability of a regular polynomial vector optimization problem
  without convexity.
\newblock \emph{Optimization}, 72\penalty0 (3):\penalty0 821--841, 2023.

\bibitem{MDJ14}
Magron~V, Henrion~D, and Lasserre~JB.
\newblock Approximating pareto curves using semidefinite relaxations.
\newblock \emph{Oper. Res. Lett.}, 42\penalty0 (6-7):\penalty0 432--437, 2014.

\bibitem{MDJ15}
Magron~V, Henrion~D, and Lasserre~JB.
\newblock Semidefinite approximations of projections and polynomial images of
  semialgebraic sets.
\newblock \emph{SIAM J. Optim.}, 25\penalty0 (4):\penalty0 2143--2164, 2015.

\bibitem{MLM19}
Mai~N, Lasserre~JB, and Magron~V.
\newblock Positivity certificates and polynomial optimization on non-compact
  semialgebraic sets.
\newblock \emph{Math. Program}, pages 1--43, 2021.

\bibitem{marler2004survey}
Marler~RT and Arora~JS.
\newblock Survey of multi-objective optimization methods for engineering.
\newblock \emph{Struct. Multidiscip. Optim.}, 26:\penalty0 369--395, 2004.

\bibitem{matsatsinis2003agentallocator}
Matsatsinis~NF and Delias~P.
\newblock Agentallocator: An agent-based multi-criteria decision support system
  for task allocation.
\newblock In \emph{International Conference on Industrial Applications of
  Holonic and Multi-Agent Systems}, pages 225--235. Springer, 2003.

\bibitem{miettinen2012nonlinear}
Miettinen~K.
\newblock \emph{Nonlinear multiobjective optimization}, volume~12.
\newblock Springer Science \& Business Media, 1999.

\bibitem{PMISDr}
Nie~J.
\newblock Polynomial matrix inequality and semidefinite representation.
\newblock \emph{Math. Oper. Res.}, 36\penalty0 (3):\penalty0 398--415, 2011.

\bibitem{NieDis12}
Nie~J.
\newblock Discriminants and nonnegative polynomials.
\newblock \emph{J. Symbolic Comput.}, 47\penalty0 (2):\penalty0 167--191, 2012.

\bibitem{nie2013certifying}
Nie~J.
\newblock Certifying convergence of lasserre's hierarchy via flat truncation.
\newblock \emph{Math. Program}, 142:\penalty0 485--510, 2013.

\bibitem{Nie-opcd}
Nie~J.
\newblock Optimality conditions and finite convergence of lasserre’s
  hierarchy.
\newblock \emph{Math. Program}, 146:\penalty0 97--121, 2014.

\bibitem{linmomopt}
Nie~J.
\newblock Linear optimization with cones of moments and nonnegative
  polynomials.
\newblock \emph{Math. Program}, 153:\penalty0 247--274, 2015.

\bibitem{NieGP17}
Nie~J.
\newblock Generating polynomials and symmetric tensor decompositions.
\newblock \emph{Found. Comput. Math.}, 17:\penalty0 423--465, 2017.

\bibitem{Tight19}
Nie~J.
\newblock Tight relaxations for polynomial optimization and lagrange multiplier
  expressions.
\newblock \emph{Math. Program}, 178:\penalty0 1--37, 2019.

\bibitem{NieZhang18}
Nie~J. and Zhang~X.
\newblock Real eigenvalues of nonsymmetric tensors.
\newblock \emph{Comput. Optim. Appl.}, 70\penalty0 (1):\penalty0 1--32, 2018.

\bibitem{Nie2018saddle}
Nie~J, Yang~Z, and Zhou~G.
\newblock The saddle point problem of polynomials.
\newblock \emph{Found. Comput. Math.}, pages 1--37, 2021.

\bibitem{Put}
Putinar~M.
\newblock Positive polynomials on compact semi-algebraic sets.
\newblock \emph{Indiana Univ. Math. J.}, 42\penalty0 (3):\penalty0 969--984,
  1993.

\bibitem{rosen2017}
Rosenthal~S and Borschbach~M.
\newblock Design perspectives of an evolutionary process for multi-objective
  molecular optimization.
\newblock In \emph{International Conference on Evolutionary Multi-Criterion
  Optimization}, pages 529--544. Springer, 2017.

\bibitem{ruiz1995characterization}
Ru{\'\i}z-Canales~P and Rufi{\'a}n-Lizana~A.
\newblock A characterization of weakly efficient points.
\newblock \emph{Math. Program}, 68\penalty0 (1-3):\penalty0 205--212, 1995.

\bibitem{van2012multi}
Horst~E, Marqu{\'e}s-Gallego~P, Mulder-Krieger~T, Veldhoven~JV, Kruisselbrink~J, Aleman~A, Emmerich~MT,
  Brussee~J, Bender~A, and IJzerman~AP.
\newblock Multi-objective evolutionary design of adenosine receptor ligands.
\newblock \emph{Journal of chemical information and modeling}, 52\penalty0
  (7):\penalty0 1713--1721, 2012.

\bibitem{wang2014cvx}
Wang~P, Emmerich~M, Li~R, Tang~K, B{\"a}ck~T, and Yao~X.
\newblock Convex hull-based multiobjective genetic programming for maximizing
  receiver operating characteristic performance.
\newblock \emph{IEEE Transactions on Evolutionary Computation}, 19\penalty0
  (2):\penalty0 188--200, 2014.

\bibitem{zionts1988multiple}
Zionts~S.
\newblock Multiple criteria mathematical programming: an updated overview and
  several approaches.
\newblock \emph{Multiple criteria decision making and risk analysis using
  microcomputers}, pages 7--60, 1989.

\end{thebibliography}



\end{document}